\numberwithin{equation}{section}
\newtheorem{theorem}{Theorem}[section]
\newtheorem{lemma}[theorem]{Lemma}
\theoremstyle{definition}
\newtheorem{definition}[theorem]{Definition}
\newtheorem{example}[theorem]{Example}
\newtheorem{proposition}[theorem]{Proposition}
\newtheorem{corollary}[theorem]{Corollary}
\newtheorem{note}[theorem]{Note}
\theoremstyle{remark}
\newtheorem{remark}[theorem]{Remark}
\numberwithin{equation}{section}
\title{Further analysis of Multivariate fractal functions}
\author[1*]{Amit}
\author[2]{Vineeta Basotia}
\author[3]{Ajay Prajapati}
\affil[1*,2]{Department of Mathematics, JJT University, Jhunjhunu, Rajasthan 333001, India }	
\affil[3]{Department of Computer Science, Banaras Hindu University, Varanasi 221005, India}
\affil[1*]{Corresponding author: amit60363@gmail.com}
\affil[2]{vm.jjtu@gmail.com}
\affil[3]{ajaypraja640@gmail.com}
\begin{document}
	
	
	
\date{}	
	
	

	
\maketitle







 

\begin{abstract} 
 The aim of this paper is to characterize a fractal operator associated with multivariate fractal interpolation functions (FIFs) and study the several properties of this fractal operator. Further, with the help of this operator, we characterize a latest category of functions and study their approximation aspects. The basic characteristics of this multivariate fractal operator's are given in several ways in this note. The extension of this fractal operator to the $L^p$-spaces for $ p \ge 1$ are also examined. Multivariate continuous fractal functions approximation characteristics are also examined.
\end{abstract}
{\bf Keywords :} {Fractal interpolation, Fractal operator, Bounded linear operator}\newline
{\bf Math subject Classification:} {Primary 28A80; Secondary 47H10, 54E50}

\section{Introduction} 
Hutchinson\cite{Hu} suggested the idea of an iterated function system (IFS), which is generalized by the Barnsley\cite{MB}. IFSs give a potential approach to study the structure of images and serve as a standard foundation for expressing self-referential sets like fractals. Numerous modifications to the traditional IFS like Partitioned IFS, Recurrent IFS and Super IFS etc are explored by many authors in literature \cite{MFB1, MFB2, YF}. Agrawal and Som\cite{EPJST} examined the Sierpinski gasket-based FIF and its box dimension corresponding to a continuous function. Further, they also studied the approximation of functions by fractal functions corresponding to the $L^p$-norm on the Sierpi\'nski gasket in \cite{RIM}. Sahu and Priyadrshi \cite{SP} demonstrated the box dimension of the graph of a harmonic function on Sierpi\'nski  gasket. Verma and Sahu \cite{VFS} calculated the fractal dimension of a continuous function with a bounded variation on  Sierpi\'nski  gasket.   In this order, Agrawal and Som  \cite{Vishal} given the concept of dimension preserving approximation over the rectangular domain for the bivariate continuous function. Verma and Viswanatha \cite{sverma5} confirmed the presence of the fractal rational trigonometric approximant to continuous function with real-values on a compact interval and develop Weierstrass type approximation theorems. We can see some recent work done by many authors on fractal  dimensions in literature\cite{JVC1,Jha,Liang,VL1,sverma3}.
\par

The partial integrals and derivatives of the bivariate FIFs studied by the Chandra and Abbas\cite{SS1}. They also deal with the integral transforms and its fractional order of bivariate FIFs. Chandra and Abbas\cite{SS4} investigated the mixed Weyl–Marchaud fractional derivative and also show its bivariate FIFs is still bivariate FIFs. Chandra and Abbas\cite{SS2} studied the fractal dimension of the graph of the mixed Riemann-Liouville fractional integral for different choices of continuous functions over a rectangular region. Chandra and Abbas\cite{SS3} worked on box dimension of the mixed Katugampola fractional integral of two-dimensional continuous functions defined on $[0,1]\times[0,1]$.
Study of fractal dimensions of fraction functions on function spaces is given in \cite{SS5}.
\par
Banach contraction principle is an important part of fixed point theory. Banach contraction principle is modified in its generalizations where Lipschitz constant is replaced by some real valued function in \cite{SR1,Amit}. Boyd and Wong \cite{PV1} also given the detailed study about its generalizations and showed fixed points' existence.  The construction of FIFs with the use of Rakotch and Matkowski fixed point theorems can see in \cite{SR2}. The theory on fractal dimension of vector valued function given by the Verma et al. \cite{Ver1}.
\par
 Verma \cite{VM1} investigated approximation issues in relation to the fractal dimensions of function as well as their derivative.
S. Jha et al.\cite{Jha2} explored how well non-stationary fractal polynomials approximate continuous functions and the sequence of mappings of non-stationary IFSs were used here. Priyadarshi and his group \cite{Ver71} studed fractal dimensions of attractors of  bi-Lipschitz IFSs and also obtained some results on quantization dimension.
Massopust et al. \cite{Mas1} summerized some results from the theory of fractal functions. The class of fractal functions under consideration is generated by IFSs. The same author in \cite{Mas2} discussed the theory of local IFS, local fractal functions, fractal surfaces, and their relation to wavelets as well as wavelet sets.
Navascues \cite{M2} developed a method to compute the interpolation resulting from the coupling of two functions of different types. The $\alpha$-fractal function for $f$ is the name of the FIF connected to an IFS.The interaction between fractal functions and other traditional disciplines of mathematics, such as operator theory, complex analysis, harmonic analysis, and approximation theory, is made possible by the operator formulation of fractal functions that is partially concealed in the creation of FIFs  \cite{M2,M1,Mnew,Mnew2}. To know more about FIFs, read this article \cite{PV11}.
 V. Agrawal et al. \cite{VSV1} discussed the  continuous dependence for bivariate  fractal function on different parameters and in \cite{PAS3}  they constructed the multivariate FIFs and defined the $\alpha$-fractal function corresponding to the multivariate continuous function which defined on $[0,1]^q$. M. Pandey et al. \cite{PSV2} presented the idea of the fractal approximation for set-valued continuous map and the $\alpha$ -fractal functions defined on bounded as well as closed interval. M. Verma et al.\cite{Ver12} examined fractal dimension of complex-valued functions.
\par 
Our paper is organized as follows. In Section $2$, we recall the construction of multivariate $\alpha$-fractal function due to Megha, Vishal and Som following Ruan's construction. The fractal operator $\mathcal{F}_{\Delta, L}^\alpha: \mathcal{C}(I^q, \mathbb{R})   \rightarrow \mathcal{C}(I^q, \mathbb{R})$ will be defined and its basic features will be established in section 3 that converts a bivariate continuous function f to its fractal analogue ${f}_{\Delta, L}^\alpha$.
In Section 4, we extend our fractal operator to $L^p(I^q, \mathbb{C})$ using the standard density parameters. Some characteristics of multivaruate ``fractal polynomials" and continuous multivaruate $\alpha$-fractal functions that relate to approximation are examined in Section 5.

\section{Auxiliary Apparatus} 
\subsection{Multivariate Fractal Interpolation Function}\label{subsec1}
Here we revisit the construction, which is a generalization of Ruan's construction \cite{Ruan}. The data of interpolation is given as 
\[\left\{\left(x_{1,j_1},\ldots, x_{k,j_k}, z_{(1,j_1),\ldots,(k,j_k)}\right): j_1=0,1,\ldots, N_1; ~ \ldots ~; j_k=0,1,\ldots, N_k\right\},\]
such that $0=x_{q,0}<\cdots < x_{q,N_q}=1$ for each $I_q$, where $I_q$ denotes the $q^{th}$ unit interval in $I^k$.\\
We will now write some notations for our convenience as follows;\\
$ \Sigma_q=\{1,2,\ldots,q\},$ $  \Sigma_{q,0}=\{0,1,\ldots, q \},$ $\partial  \Sigma_{q,0}=\{0,q\} $ and int$ \Sigma_{q,0}=\{1,2,\ldots,q-1\}.$ \\ 
Further, a net $\Delta$ on $I^k$ is defined as follows:
\[\Delta:=\left\{(x_{1,j_1}
, \ldots , x_{k,j_k}) \in I ^k: j_q \in \Sigma_{N_q,0}, ~ 0= x_{q,0} <\ldots < x_{q,N_q} =1,~ q \in \Sigma_{k}\right\}.\]
For each $j_q \in  \Sigma_{N_{q}}$, let us define $I_{q,j_{q}} = [x_{q,j_{q}-1}, x_{q,j_{q}}]$ and contraction functions $v_{q,j_{q}}
: I_q \rightarrow I_{q,j_q}$ such that
\begin{equation}\label{affine}
    \begin{aligned}
        v_{q,j_q}(x_{q,0})=&x_{q,j_q-1}, \quad \quad v_{q,j_q}(x_{q,N_q})=x_{q,j_q}, \text{  if $j_q$ is odd,}\\
 v_{q,j_q}(x_{q,0})=&x_{q,j_q}, \quad \quad v_{q,j_q}(x_{q,N_q})=x_{q,j_q-1}, \text{ if $j_q$ is even, and }\\
 \left\lvert v_{q,j_q}(x)-v_{q,j_q}(y) \right\rvert &\leq \mu_{q,j_q}\lvert x-y\rvert \text{ for all } x,y \in I_q, \text{ and } 0< \mu_{q,j_q}<1.
    \end{aligned}
\end{equation}
It is easy to observe from the definition of $v_{q,j_q}$ that 
\[v_{q,j_{q}}^{-1}(x_{q,j_{q}})=v_{q,j_{q}+1}^{-1}(x_{q,j_q}) \text{ for all } j_q\in \text{int}\Sigma_{q,0} \text{ and } q\in \Sigma_k.\]
Define a function $\eta : \mathbb{Z}\times \{0, N_1, \ldots, N_k\}\rightarrow \mathbb{Z}$ by 
\begin{equation*}
    \begin{cases}
    \eta(j,0)=j-1, \quad \eta(j,N_1)=\cdots =\eta(j,N_k)=j, &\text{ when $j$ is odd,}\\
    \eta(j,0)=j, \quad \eta(j,N_1)=\cdots =\eta(j,N_k)=j-1, & \text{ when $j$ is even.}
    \end{cases}
\end{equation*}
 Set $K=I^k\times \mathbb{R}$. For each $(j_1,\ldots, j_k)\in \overset{k}{\underset{q=1}{\prod}} \Sigma_{N_q}$, let $F_{j_1,\ldots, j_k}:K\rightarrow \mathbb{R}$ be a continuous function satisfying
\[F_{j_1\cdots j_k}\left(x_{1,q_1},\ldots, x_{k,q_k}, z_{(1,q_1)\cdots(k,q_k)}\right)=z_{(1,\eta(j_1,q_1))\cdots (k,\eta(j_k,q_k))} \]
for all $ (q_1,\ldots, q_k)\in \prod_{i=1}^{k}\partial \Sigma_{N_i,0},$ and 
\[\left\lvert F_{j_1\cdots j_k}(x_1,\ldots,x_k, z^{*})-F(x_1,\ldots, x_k, z^{**})\right \rvert \leq \gamma_{j_1\cdots j_k}\lvert z^{*}-z^{**}\rvert,
\]
 for all $(x_1,\ldots, x_k)\in I^{k},~ z^{*}, z^{**}\in \mathbb{R},$
 and  $0<\gamma_{j_1\cdots j_k}<1~$ is any given constant.\\
For each $(j_1,\ldots, j_k)\in \overset{k}{\underset{q=1}{ \prod}} \Sigma_{N_q}$, define a function $\Omega_{j_1\cdots j_k}:K\rightarrow \overset{k}{\underset{q=1}{ \prod}}I_{q,j_q}\times \mathbb{R}$ such that
\begin{equation}\label{IFS}
  \Omega_{j_1\cdots j_k}(x_1,\ldots, x_k, z)=\left(v_{1,j_1}(x_1),\ldots, v_{k,j_k}(x_k),F_{j_1\cdots j_k}(x_1,\ldots, x_k,z)\right).  
\end{equation}
Then, with the assumptions on $F_{j_1\cdots j_k}$ and $v_{q,j_q}$, for each $q\in  \Sigma_k$, one can prove that $\Omega_{j_1\cdots j_k}$ is a contraction function, hence $$\left\{K,~\Omega_{j_1\cdots j_k}: (j_1,\ldots,j_k)\in \overset{k}{\underset{q=1}{ \prod}} \Sigma_{N_q}\right\}~$$ is an IFS.

The next theorem can be accomplished by using the technique similar to \cite{Ruan}.
\begin{theorem}\label{matching}
Consider $\left\{K,\Omega_{j_1\cdots j_k}~: (j_1,\ldots,j_k)\in \overset{k}{\underset{q=1}{ \prod}} \Sigma_{N_q}\right\}$ be the IFS as defined in \eqref{IFS}. For every $q=1,\ldots,k$, suppose $F_{j_1\cdots j_k}$ satisfies the following matching conditions: for all $j_q\in \text{int} \Sigma_{N_{q},0}, i\neq q,~j_i\in  \Sigma_{N_i} \text{ and } x_{q}^{*}=v_{q,j_q}^{-1}(x_{q,j_q})=v_{q,j_q+1}^{-1}(x_{q,j_q}),$
\begin{equation}
    F_{j_1\cdots j_k}(x_1,\ldots, x_{q}^{*}, x_{q+1},\ldots, x_k, z)=F_{j_1\cdots j_{q-1},j_{q}+1 j_{q+1}\cdots j_k}(x_1,\ldots, x_{q}^{*}, x_{q+1},\ldots, x_k, z),
\end{equation}
for all $x_i\in I_i, ~ z\in \mathbb{R}$. Then, there is a unique continuous function $\mathcal{A}: I^{k}\rightarrow \mathbb{R}$ such that $\mathcal{A}(x_{j_1},\ldots, x_{j_k})=z_{j_1\cdots j_k}$ for all $(j_1,\ldots, j_k)\in \overset{k}{\underset{q=1}{ \prod}} \Sigma_{N_q,0}$ and $$H= \bigcup\left\{\Omega_{j_1\cdots j_k}(H): (j_1,\ldots,j_k)\in \overset{k}{\underset{q=1}{ \prod}} \Sigma_{N_q}\right\},$$ where $H=\mathcal{G}(\mathcal{A})$.
We pronounce $H$ the FIS and $\mathcal{A}$ the FIF with respect to the IFS defined in \eqref{IFS}.
\end{theorem}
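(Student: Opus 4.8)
The plan is to realize $\mathcal{A}$ as the unique fixed point of a Read--Bajraktarevi\'c (RB) operator and then extract the two asserted properties from the fixed-point equation, following the one-dimensional scheme of Ruan. As the ambient space I would take
\[\mathcal{C}^{*}=\Big\{g\in\mathcal{C}(I^k,\mathbb{R}): g(x_{1,q_1},\ldots,x_{k,q_k})=z_{(1,q_1)\cdots(k,q_k)}\ \text{for every}\ (q_1,\ldots,q_k)\in\textstyle\prod_{i=1}^{k}\partial\Sigma_{N_i,0}\Big\},\]
the continuous functions on $I^k$ pinned to the prescribed data at the $2^k$ corner nodes; as a closed subset of $\big(\mathcal{C}(I^k,\mathbb{R}),\|\cdot\|_\infty\big)$ it is a complete metric space under the uniform metric. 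I would then define the RB operator $T$ cell by cell: for $(x_1,\ldots,x_k)\in\prod_{q=1}^{k}I_{q,j_q}$ set
\[(Tg)(x_1,\ldots,x_k)=F_{j_1\cdots j_k}\Big(v_{1,j_1}^{-1}(x_1),\ldots,v_{k,j_k}^{-1}(x_k),\ g\big(v_{1,j_1}^{-1}(x_1),\ldots,v_{k,j_k}^{-1}(x_k)\big)\Big),\]
where $v_{q,j_q}^{-1}\colon I_{q,j_q}\to I_q$ is the inverse of the contraction in \eqref{affine}. On each closed subcell $Tg$ is a composition of continuous maps and hence continuous there, and the corner constraints defining $\mathcal{C}^{*}$ are reproduced by $Tg$ thanks to the endpoint identities on $F_{j_1\cdots j_k}$ encoded through $\eta$.

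The crux, and the step I expect to be the main obstacle, is to verify that these subcell formulas glue into a single-valued continuous function on all of $I^k$. The subcells meeting along an interior face $\{x_q=x_{q,j_q}\}$ are those indexed by $(\ldots,j_q,\ldots)$ and $(\ldots,j_q+1,\ldots)$, and on that face both formulas are evaluated at the common preimage $x_q^{*}=v_{q,j_q}^{-1}(x_{q,j_q})=v_{q,j_q+1}^{-1}(x_{q,j_q})$. The two candidate values of $Tg$ then coincide precisely by the matching condition assumed in the statement; performing this check over every interior face and in every coordinate direction $q\in\Sigma_k$ establishes continuity of $Tg$ across $I^k$, so that $T(\mathcal{C}^{*})\subseteq\mathcal{C}^{*}$. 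This is the step in which the full strength of the matching hypothesis is consumed, since without it the operator would be multivalued along the interior faces.

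Having secured that $T$ maps $\mathcal{C}^{*}$ into itself, I would show it is a contraction. For $g,h\in\mathcal{C}^{*}$ and $x$ in the cell indexed by $(j_1,\ldots,j_k)$, the Lipschitz bound on $F_{j_1\cdots j_k}$ in its last argument gives $\lvert(Tg)(x)-(Th)(x)\rvert\le\gamma_{j_1\cdots j_k}\,\|g-h\|_\infty$, whence $\|Tg-Th\|_\infty\le\gamma\,\|g-h\|_\infty$ with $\gamma=\max_{(j_1,\ldots,j_k)}\gamma_{j_1\cdots j_k}<1$. Banach's fixed point theorem then yields a unique $\mathcal{A}\in\mathcal{C}^{*}$ with $T\mathcal{A}=\mathcal{A}$.

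Finally I would read the two conclusions off the fixed-point equation. Evaluating $\mathcal{A}=T\mathcal{A}$ at a grid node, whose coordinates the maps $v_{q,j_q}^{-1}$ carry to corner coordinates of $I^k$, and then invoking the $\eta$-endpoint conditions on $F_{j_1\cdots j_k}$ together with the corner data built into $\mathcal{C}^{*}$, yields the interpolation property $\mathcal{A}(x_{1,j_1},\ldots,x_{k,j_k})=z_{(1,j_1)\cdots(k,j_k)}$ at every node. For the attractor identity, the substitution $y_q=v_{q,j_q}(x_q)$ turns $\Omega_{j_1\cdots j_k}(\mathcal{G}(\mathcal{A}))$ into exactly the portion of $\mathcal{G}(\mathcal{A})$ lying over $\prod_{q=1}^{k}I_{q,j_q}$, by the definition of $T$ and the equation $\mathcal{A}=T\mathcal{A}$; since these cells cover $I^k$, the union over all index tuples equals $H=\mathcal{G}(\mathcal{A})$. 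Uniqueness of $\mathcal{A}$ follows because any continuous function whose graph is the IFS attractor necessarily satisfies $T\mathcal{A}=\mathcal{A}$, and the RB fixed point is unique.
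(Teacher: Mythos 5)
Your proposal is correct and follows essentially the same route the paper intends: the paper omits the proof, stating only that the theorem "can be accomplished by using the technique similar to" Ruan's construction, and that technique is precisely your Read--Bajraktarevi\'c fixed-point argument on the space of continuous functions pinned at the corner nodes, with the matching conditions used to glue the cellwise definitions across interior faces. The paper's own later use of the RB operator in Sections 2.2 and 3.1 confirms this is the intended mechanism, so no further comparison is needed.
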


\subsection{Multivariate Fractal Interpolation surfaces}\label{subsec2}
Define $F_{j_1\cdots j_k}: K\rightarrow \mathbb{R}$ such that 
\[F_{j_1\cdots j_k}(x_1,\ldots, x_k, z)=\delta z+\mathcal{B}_{j_1\cdots j_k}(x_1,\ldots,x_k),\]
where $\mathcal{B}_{j_1\cdots j_k}(x_{1,q_1},\ldots x_{k,q_k})=z_{(1,\eta(j_1,q_1)),\cdots, (k,\eta(j_k,q_k))}-\delta z_{(1,q_1)\cdots (k,q_k)}$ and $\lvert \delta \rvert<1.$
Define  $\Omega_{j_1\cdots j_k}:K\rightarrow \overset{k}{\underset{q=1}{ \prod}}I_{q,j_q}\times \mathbb{R}$  as
\[\Omega_{j_1\cdots j_k}(x_1,\ldots, x_k,z)=\left(v_{1,j_1}(x_1),\ldots, v_{k,j_k}(x_k), F_{j_1\cdots,j_k}(x_1,\ldots, x_k,z)\right).\]
Notice that $F_{j_1\cdots j_k}$ satisfies the conditions of Theorem \ref{matching}.
Now for all $(j_1,\ldots, j_k)\in \overset{k}{\underset{q=1}{ \prod}} \Sigma_{N_q}$, define Read-Bajraktarevi\'c (RB) operator, $T:\mathcal{C}(I^k)\rightarrow \mathcal{C}(I^k)$ such that
\[Tg(x_1,\ldots, x_k)=F_{j_1\cdots j_k}\left(v_{j,j_1}^{-1}(x_1),\ldots, v_{k,j_k}^{-1}(x_k), g\left(v_{1,j_1}^{-1}(x_1),\ldots,v_{k,j_k}^{-1}(x_k)\right)\right).\]
Then, the multivariate FIF,  $\mathcal{A}$ will be the unique fixed point of $T$. Therefore, $\mathcal{A}$ will satisfy the following self-referential equation:
\[
    \mathcal{A}(x_1,\ldots, x_k)=F_{j_1\cdots j_k}\left(v_{j,j_1}^{-1}(x_1),\ldots, v_{k,j_k}^{-1}(x_k), \mathcal{A}\left(v_{1,j_1}^{-1}(x_1),\ldots,v_{k,j_k}^{-1}(x_k)\right)\right).
\]
Therefore, we have
\begin{equation}\label{interpolation}
    \mathcal{A}(x_1,\ldots, x_k)=\delta \mathcal{A}\left(v_{1,j_1}^{-1}(x_1),\ldots,v_{k,j_k}^{-1}(x_k)\right)+\mathcal{B}_{j_1\cdots j_k}\left(v_{1,j_1}^{-1}(x_1),\ldots,v_{k,j_k}^{-1}(x_k)\right),
\end{equation}
for all $(x_1,\ldots, x_k)\in \overset{k}{\underset{q=1}{ \prod}}I_{q,j_q}$ and $(j_1,\ldots, j_k)\in \overset{k}{\underset{q=1}{ \prod}}~ \Sigma_{N_q}.$

\begin{note}\label{Naffine}
For every $x\in I_{q,{j_{q}}}$, let us define
$v_{q,j_{q}}(x)=a_{q,j_{q}}(x)+b_{q,j_{q}}$ such that
\begin{align*}
    a_{q,j_{q}}&=\frac{x_{q,j_{q}}-x_{q,j_{q-1}}}{x_{q,N_{q}}-x_{q,0}} \quad \text{and} \quad b_{q,j_{q}}= \frac{x_{q,j_{q}}x_{q,N_{q}}-x_{q,j_{q-1}}x_{q,0}}{x_{q,N_{q}}-x_{q,0}}, \text{  if $j_q$ is odd}\\
    a_{q,j_{q}}&=\frac{x_{q,j_{q-1}}-x_{q,j_{q}}}{x_{q,N_{q}}-x_{q,0}} \quad \text{and} \quad b_{q,j_{q}}=\frac{x_{q,j_{q-1}}x_{q,N_{q}}-x_{q,j_{q}}x_{q,0}}{x_{q,N_{q}}-x_{q,0}}, \text{  if $j_q$ is even}.
\end{align*}
 Then, obviously $v_{q,j_{q}}$ satisfies Equation \eqref{affine}.
\end{note}

\section{Multivariate $\alpha$ - fractal functions and associated fractal operator on $\mathcal{C}(I^k , \mathbb{R})$} \label{FIF}
\subsection{$\alpha$- fractal function}
\hspace{0.9cm}Let $f \in \mathcal{C}(I^k)$, then using the notions defined in Subsection \ref{subsec1}, let us denote
\[Q_{j_1\ldots j_k}(x_1,\ldots,x_k):= \left(v_{1,j_1}^{-1}(x_1),\ldots,v_{k,i_k}^{-1}(x_k)\right),\] 
where $(x_1,\ldots,x_k) \in \overset{k}{\underset{q=1}{ \prod}}I_{q,j_q}.$\\
Let $\alpha \in  \mathcal{C}(I^k)$ be such that $\|\alpha\|_{\infty}<1.$ Assume further that $s \in  \mathcal{C}(I^k)$ satisfies $s(x_{1,i_1},\ldots,x_{k,i_k})=f(x_{1,i_1},\ldots,x_{k,i_k}),$ for all $(i_1, \ldots,i_k) \in
\overset{k}{\underset{q=1}{ \prod}} \partial  \Sigma_{N_q,0}.$ \\
Now define $\mathcal{H}_{j_{1}\dots j_{k}}: K \rightarrow \mathbb{R}$ by
\begin{align}
 &\mathcal{H}_{j_1 \dots j_k}(y_1, \dots, y_k, z) \nonumber \\
 =&f\left(v_{1,j_1}(y_1), \dots, v_{k,j_{k}}(y_k)\right)+\alpha\left(v_{1,j_1}(y_1), \dots, v_{k,j_{k}}(y_k)\right) \left(z-s(y_1, \dots, y_k)\right),\label{falpha}
 \end{align}
For each $(j_1, \dots, j_k) \in \overset{k}{\underset{q=1}{ \prod}}  \Sigma_{N_{q}}$, define $W_{j_1 \dots j_k}: K \rightarrow \overset{k}{\underset{q=1}{ \prod}}I_{q,j_q}\times \mathbb{R}$
\[
W_{j_1 \dots j_k}(x_1, \dots, x_k, z)=\left(v_{1,j_1}(x_1), \dots, v_{k,j_{k}}(x_k), \mathcal{H}_{j_1 \dots j_k}(x_1, \dots, x_k, z)\right).
\]
Now observe that $\mathcal{H}_{j_1 \dots j_k}$ satisfies the conditions of Theorem \ref{matching}, therefore we have the following:
 \begin{theorem} Let $\left\{K, W_{j_1 \dots j_k}:(j_1, \dots, j_k) \in \overset{k}{\underset{q=1}{ \prod}}  \Sigma_{N_{q}}\right\}$ be the IFS. Then there exists a unique continuous function $f^{\alpha}_{\Delta,s}: I^k \rightarrow$ $\mathbb{R}$ such that $f^{\alpha}_{\Delta,s}\left(x_{1,j_1},\dots,x_{k,j_k}\right)=f\left(x_{1,j_1},\dots,x_{k.j_k}\right)$ for all $(j_1, \dots, j_k) \in \overset{k}{\underset{q=1}{ \prod}} \Sigma_{ N_{q},0}$ and $G= \bigcup\left\{W_{j_1\cdots j_k}(G): (j_1,\ldots,j_k)\in \overset{k}{\underset{q=1}{ \prod}} \Sigma_{N_q,0}\right\}, $ where $\mathcal{G}(f^{\alpha}_{\Delta,s})=G$.
\end{theorem}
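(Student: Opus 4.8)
The plan is to obtain $f^\alpha_{\Delta,s}$ directly from Theorem \ref{matching}, so that the whole argument reduces to verifying that the maps $\mathcal{H}_{j_1\cdots j_k}$ of \eqref{falpha} (equivalently the $W_{j_1\cdots j_k}$) meet every hypothesis of that theorem. I would first record continuity: since $f,\alpha,s\in\mathcal{C}(I^k)$ and each $v_{q,j_q}$ is continuous, the right-hand side of \eqref{falpha} is a sum and product of continuous functions, hence $\mathcal{H}_{j_1\cdots j_k}$ is continuous on $K$ and $\{K,W_{j_1\cdots j_k}\}$ is a genuine IFS of the required form.

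Next I would check the vertical contractivity. For fixed $(y_1,\ldots,y_k)$ the dependence of \eqref{falpha} on $z$ is affine with slope $\alpha(v_{1,j_1}(y_1),\ldots,v_{k,j_k}(y_k))$, so
\[
\left\lvert\mathcal{H}_{j_1\cdots j_k}(y_1,\ldots,y_k,z^{*})-\mathcal{H}_{j_1\cdots j_k}(y_1,\ldots,y_k,z^{**})\right\rvert\le\|\alpha\|_{\infty}\,\lvert z^{*}-z^{**}\rvert,
\]
and the assumption $\|\alpha\|_{\infty}<1$ supplies the constant $\gamma_{j_1\cdots j_k}$ demanded by Theorem \ref{matching}.

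The core of the verification is the endpoint (interpolation) condition, which I expect to require the most care because it combines the parity bookkeeping in $\eta$ with the boundary behaviour of $s$. Using \eqref{affine} and the definition of $\eta$, a short case analysis on the parity of $j_q$ shows $v_{q,j_q}(x_{q,q_i})=x_{q,\eta(j_q,q_i)}$ whenever $q_i\in\{0,N_q\}$. Feeding a corner datum $(x_{1,q_1},\ldots,x_{k,q_k},z_{(1,q_1)\cdots(k,q_k)})$ with $(q_1,\ldots,q_k)\in\prod_i\partial\Sigma_{N_i,0}$ into \eqref{falpha}, the prescribed compatibility $s=f$ on $\prod_i\partial\Sigma_{N_i,0}$ together with the choice of data $z_{(1,q_1)\cdots(k,q_k)}=f(x_{1,q_1},\ldots,x_{k,q_k})$ annihilates the $\alpha$-term, leaving
\[
\mathcal{H}_{j_1\cdots j_k}\bigl(x_{1,q_1},\ldots,x_{k,q_k},z_{(1,q_1)\cdots(k,q_k)}\bigr)=f\bigl(x_{1,\eta(j_1,q_1)},\ldots,x_{k,\eta(j_k,q_k)}\bigr)=z_{(1,\eta(j_1,q_1))\cdots(k,\eta(j_k,q_k))},
\]
which is exactly the endpoint identity assumed in Theorem \ref{matching}.

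Finally I would dispatch the matching conditions. For $j_q\in\mathrm{int}\,\Sigma_{N_q,0}$ and $x_q^{*}=v_{q,j_q}^{-1}(x_{q,j_q})=v_{q,j_q+1}^{-1}(x_{q,j_q})$ one has $v_{q,j_q}(x_q^{*})=x_{q,j_q}=v_{q,j_q+1}(x_q^{*})$, so the arguments of $f$ and $\alpha$ in \eqref{falpha} coincide whether one uses the index $j_q$ or $j_q+1$; since the subtracted term $s(y_1,\ldots,y_k)$ does not depend on the index tuple, the two sides of the matching equation agree identically. With all hypotheses of Theorem \ref{matching} confirmed, that theorem delivers a unique continuous $f^\alpha_{\Delta,s}$ interpolating $f$ at the nodes of $\prod_q\Sigma_{N_q,0}$ and whose graph $G=\mathcal{G}(f^\alpha_{\Delta,s})$ is the attractor obeying the stated self-referential union, completing the proof.
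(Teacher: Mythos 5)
Your proposal is correct and follows exactly the route the paper takes: the paper simply asserts that $\mathcal{H}_{j_1\cdots j_k}$ satisfies the hypotheses of Theorem \ref{matching} and invokes that theorem, while you supply the verifications (continuity, vertical contractivity from $\|\alpha\|_\infty<1$, the endpoint identity via the parity of $\eta$ and the condition $s=f$ on the corner indices, and the matching conditions) that the paper leaves implicit. No gaps; your write-up is in fact more complete than the paper's.
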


Now define a RB operator, $T:\mathcal{C}(I^k)\rightarrow \mathcal{C}(I^k)$ such that 
\[T(h)(x_1,\ldots, x_k)=\mathcal{H}_{j_1 \dots j_k}\left(v_{1,j_1}^{-1}(x_1),\dots,v_{k,j_k}^{-1}(x_k), h\left(v_{1,j_1}^{-1}(x_1),\ldots,v_{k,j_k}^{-1}(x_k)\right)\right),\]
then there exists a fixed point of this operator, $f^{\alpha}_{\Delta,s}$ known as multivariate variate $\alpha$-fractal function and it satisfies the following self-referential equation,
\[f^{\alpha}_{\Delta,s}(x_1, \dots, x_k)=\mathcal{H}_{j_1 \dots j_k}\left(v_{1,j_1}^{-1}(x_1),\dots,v_{k,j_k}^{-1}(x_k), f^{\alpha}_{\Delta,s}\left(v_{1,j_1}^{-1}(x_1),\ldots,v_{k,j_k}^{-1}(x_k)\right)\right),\]
for all $(x_1, \dots, x_k) \in \overset{k}{\underset{q=1}{ \prod}}I_{q,j_q}.$
That is,
 \begin{equation}\label{alphafrac}
 \begin{aligned}
   f^{\alpha}_{\Delta,s}(x_1,\ldots,x_k)
   =& f(x_1,\ldots,x_k)+\alpha(x_1,\ldots,x_k)\left[ f^{\alpha}_{\Delta,s}\left(v_{1,j_1}^{-1}(x_1),\ldots,v_{k,j_k}^{-1}(x_k)\right)\right.\\
   &-s\left.\left(v_{1,j_1}^{-1}(x_1),\ldots,v_{k,j_k}^{-1}(x_k)\right)\right],
   \end{aligned}
\end{equation}
for $(x_1,\dots,x_k) \in \overset{k}{\underset{q=1}{ \prod}} I_{q,j_q},~ j_q \in  \Sigma_{N_q},~q \in   \Sigma_k.$

\subsection{Fractal operator}
We now demonstrate the fractal operator associated with the notion of multivariate $\alpha$- fractal function. Let the parameters  $\Delta$, $\alpha$ and $s$ used in the constructions of multivariate $\alpha$- fractal function be fixed. Then, there exist an operator corresponding to the multivariate $\alpha$- fractal function
is represented by $\mathcal{F}^{\alpha}_{\Delta,s}: \mathcal{C}(I^k) \to \mathcal{C}(I^k)$, which results each fixed given/function to its fractal counterpart. That is,  
$$\mathcal{F}^{\alpha}_{\Delta,s}: \mathcal{C}(I^k) \to \mathcal{C}(I^k),~~\mathcal{F}^{\alpha}_{\Delta,s} (f) = f^{\alpha}_{\Delta,s}, ~~ \forall ~~f\in \mathcal{C}(I^k) $$ 
if $s=Df$, where $D:\mathcal{C}(I^k) \to \mathcal{C}(I^k) $ is a bounded linear operator, then the operator $\mathcal{F}^{\alpha}_{\Delta,D}$  is a bounded linear operator called as the fractal operator .
\begin{theorem} \label{32}
Let $\|\alpha\|_{\infty}=\sup \{|\alpha(x_1,x_2,$\dots$,x_k)|:(x_1,x_2,$\dots$, x_k)\in I^k\}$, and let $Id$ be the identity operator on $\mathcal{C}(I^k , \mathbb{R})$. For any $f \in \mathcal{C}(I^k, \mathbb{R})$, the perturbation error satisfies
$$
\left\|f_{\Delta, D}^\alpha-f\right\|_{\infty} \leq \frac{\|\alpha\|_{\infty}}{1-\|\alpha\|_{\infty}}\|f-D f\|_{\infty} .
$$
In particular, if $\alpha=0$, then $\mathcal{F}_{\Delta, D}^\alpha=I d$.
\end{theorem}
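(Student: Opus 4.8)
The plan is to derive the bound directly from the self-referential equation \eqref{alphafrac} by taking suprema. First I would fix $(x_1,\dots,x_k)\in I^k$ and locate the subcube $\prod_{q=1}^k I_{q,j_q}$ containing it, so that \eqref{alphafrac} applies with $s=Df$. Writing $f^\alpha_{\Delta,D}$ for $\mathcal{F}^\alpha_{\Delta,D}(f)$, the equation reads
\begin{equation*}
f^\alpha_{\Delta,D}(x_1,\dots,x_k)-f(x_1,\dots,x_k)
=\alpha(x_1,\dots,x_k)\Bigl[f^\alpha_{\Delta,D}\bigl(\mathbf{u}\bigr)-Df\bigl(\mathbf{u}\bigr)\Bigr],
\end{equation*}
where I abbreviate $\mathbf{u}=\bigl(v_{1,j_1}^{-1}(x_1),\dots,v_{k,j_k}^{-1}(x_k)\bigr)\in I^k$.

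Next I would bound the bracket on the right by splitting it as $\bigl(f^\alpha_{\Delta,D}(\mathbf{u})-f(\mathbf{u})\bigr)+\bigl(f(\mathbf{u})-Df(\mathbf{u})\bigr)$ and applying the triangle inequality. Taking absolute values and then the supremum over all $(x_1,\dots,x_k)$ — noting that as the point ranges over $I^k$ the argument $\mathbf{u}$ also ranges within $I^k$, so $\sup_{\mathbf u}\lvert f^\alpha_{\Delta,D}(\mathbf u)-f(\mathbf u)\rvert\le \lVert f^\alpha_{\Delta,D}-f\rVert_\infty$ — yields
\begin{equation*}
\left\|f^\alpha_{\Delta,D}-f\right\|_\infty
\le \|\alpha\|_\infty\Bigl(\left\|f^\alpha_{\Delta,D}-f\right\|_\infty+\left\|f-Df\right\|_\infty\Bigr).
\end{equation*}
Since $\|\alpha\|_\infty<1$, the term $\|\alpha\|_\infty\|f^\alpha_{\Delta,D}-f\|_\infty$ can be moved to the left-hand side and the inequality solved for $\|f^\alpha_{\Delta,D}-f\|_\infty$, giving exactly the stated estimate after dividing by $1-\|\alpha\|_\infty>0$.

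For the final assertion, I would set $\alpha=0$: the multiplicative factor in \eqref{alphafrac} vanishes identically, so $f^\alpha_{\Delta,D}(x_1,\dots,x_k)=f(x_1,\dots,x_k)$ for every point, whence $\mathcal{F}^\alpha_{\Delta,D}(f)=f$ for all $f$, i.e. $\mathcal{F}^0_{\Delta,D}=Id$; this is also immediate from the bound itself, which forces $\lVert f^\alpha_{\Delta,D}-f\rVert_\infty=0$ when $\|\alpha\|_\infty=0$.

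The step that requires the most care is the supremum manipulation: I must justify that $f^\alpha_{\Delta,D}$ is well defined and continuous (which Theorem\ \ref{matching} and the RB-operator fixed-point argument already guarantee), and that the self-referential identity holds pointwise on every subcube so the supremum over $I^k$ captures all contributions. The only genuine subtlety is recognizing that the composition with the inverse contractions $v_{q,j_q}^{-1}$ does not enlarge the sup-norm — since each $v_{q,j_q}^{-1}$ maps its subinterval bijectively onto $I_q$, the point $\mathbf u$ stays in $I^k$ — so that the right-hand supremum collapses to the global sup-norms. Everything else is the standard Banach-type absorption argument, so I expect no real obstacle beyond this bookkeeping.
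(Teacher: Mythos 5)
Your proposal is correct and follows essentially the same route as the paper: use the self-referential equation with $s=Df$, take suprema, split $f^\alpha_{\Delta,D}-Df$ via the triangle inequality, and absorb the $\|\alpha\|_\infty\|f^\alpha_{\Delta,D}-f\|_\infty$ term using $\|\alpha\|_\infty<1$. If anything, your write-up is tidier, since you stop at the stated bound $\frac{\|\alpha\|_\infty}{1-\|\alpha\|_\infty}\|f-Df\|_\infty$ rather than passing, as the paper does, to the weaker consequence involving $\|Id-D\|\,\|f\|_\infty$.
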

\begin{proof} For $(x_1,x_2,$\dots$,x_k) \in I^k $, from Equation (\ref{alphafrac}) we have 
\begin{equation}\label{nem123}
 \begin{aligned}
  &f_{\Delta, D}^\alpha(x_1,x_2,\dots,x_k)-f(x_1,x_2,\dots,x_k)\\&=\alpha(x_1,x_2,\dots,x_k)\left[f_{\Delta, D}^\alpha\left(v_{j,j_1}^{-1}(x_1),\ldots, v_{k,j_k}^{-1}(x_k)\right)\right.\\
  &~~~~ -(D f)\left.\left(v_{j,j_1}^{-1}(x_1),\ldots, v_{k,j_k}^{-1}(x_k)\right)\right].
   \end{aligned}
\end{equation}
Therefore, 
$$
\left|f_{\Delta, D}^\alpha(x_1,x_2,\dots,x_k)-f(x_1,x_2,\dots,x_k)\right| \leq\|\alpha\|_{\infty}\left\|f_{\Delta, D}^\alpha-L f\right\|_{\infty}.
$$
Since the above inequality is true for all $(x_1,\dots,x_k) \in \overset{k}{\underset{q=1}{ \prod}} I_{q,j_q},~ j_q \in  \Sigma_{N_q},~q \in   \Sigma_k.$\\
We conclude that
\begin{equation}\label{9}
\begin{aligned}
\left\|f_{\Delta, D}^\alpha-f\right\|_{\infty} \leq\|\alpha\|_{\infty}\left\|f_{\Delta, D}^\alpha-D f\right\|_{\infty} .
\end{aligned}
\end{equation}
Using the triangle inequality, we get
$$
\left\|f_{\Delta, D}^\alpha-f\right\|_{\infty} \leq\|\alpha\|_{\infty}\left[\left\|f_{\Delta, D}^\alpha-f\right\|_{\infty}+\|f-D f\|_{\infty}\right] .
$$
This demonstrates
$$
\left\|f_{\Delta, D}^\alpha-f\right\|_{\infty} \leq \frac{\|\alpha\|_{\infty}\|I d-D\|}{1-\|\alpha\|_{\infty}}\|f\|_{\infty} .
$$
For $\|\alpha\|_{\infty}=0$, the previous inequality produces $\left\|f_{\Delta, D}^\alpha-f\right\|_{\infty}=0$, and hence $f_{\Delta, D}^\alpha=f$ for all $f \in \mathcal{C}(I^k, \mathbb{R})$. That is, $\mathcal{F}_{\Delta, L}^\alpha=I d$.\\
    
\end{proof}
\begin{theorem} \label{33}               Let $\|\alpha\|_{\infty}=\sup \{|\alpha(x_1,x_2,$\dots$,x_k)|:(x_1,x_2,$\dots$, x_k)\in I^k\}$, and let $Id$ be the identity operator on $\mathcal{C}(I^k , \mathbb{R})$. The fractal operator $\mathcal{F}_{\Delta, D}^\alpha$ is a bounded linear operator with respect to the uniform norm on $\mathcal{C}(I^k, \mathbb{R})$. Furthermore, the operator norm satisfies
$$
\left\|\mathcal{F}_{\Delta, D}^\alpha\right\| \leq 1+\frac{\|\alpha\|_{\infty}\|I d-D\|}{1-\|\alpha\|_{\infty}}
$$   
\end{theorem}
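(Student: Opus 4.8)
The plan is to prove the two assertions separately: first linearity of $\mathcal{F}^{\alpha}_{\Delta,D}$, then boundedness together with the stated operator-norm bound. The whole argument rests on the self-referential identity \eqref{alphafrac} and on the perturbation estimate already established in Theorem \ref{32}.

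For linearity, I would exploit the uniqueness of the fixed point of the RB operator (equivalently, the uniqueness of the continuous solution of \eqref{alphafrac}). Given $f,g \in \mathcal{C}(I^k,\mathbb{R})$ and scalars $a,b$, I would form the candidate function $a\,f^{\alpha}_{\Delta,D} + b\,g^{\alpha}_{\Delta,D}$ and substitute it into \eqref{alphafrac} written for the datum $af+bg$ with $s=D(af+bg)$. Since $f^{\alpha}_{\Delta,D}$ and $g^{\alpha}_{\Delta,D}$ each satisfy their own self-referential equations, and since $D$ is linear so that $D(af+bg)=aDf+bDg$, one checks that $a\,f^{\alpha}_{\Delta,D}+b\,g^{\alpha}_{\Delta,D}$ reproduces exactly the identity \eqref{alphafrac} corresponding to $af+bg$. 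By uniqueness of the solution it must coincide with $(af+bg)^{\alpha}_{\Delta,D}$, giving $\mathcal{F}^{\alpha}_{\Delta,D}(af+bg)=a\,\mathcal{F}^{\alpha}_{\Delta,D}(f)+b\,\mathcal{F}^{\alpha}_{\Delta,D}(g)$.

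For boundedness and the norm estimate, I would begin from the triangle inequality
\[
\big\|f^{\alpha}_{\Delta,D}\big\|_{\infty} \le \big\|f^{\alpha}_{\Delta,D}-f\big\|_{\infty} + \|f\|_{\infty}.
\]
The first term is controlled by Theorem \ref{32}, which yields $\big\|f^{\alpha}_{\Delta,D}-f\big\|_{\infty} \le \tfrac{\|\alpha\|_{\infty}}{1-\|\alpha\|_{\infty}}\|f-Df\|_{\infty}$. Next I bound the perturbation by the boundedness of the linear operator $D$, writing $\|f-Df\|_{\infty}=\|(Id-D)f\|_{\infty} \le \|Id-D\|\,\|f\|_{\infty}$. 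Combining these estimates gives
\[
\big\|f^{\alpha}_{\Delta,D}\big\|_{\infty} \le \left(1+\frac{\|\alpha\|_{\infty}\|Id-D\|}{1-\|\alpha\|_{\infty}}\right)\|f\|_{\infty},
\]
and taking the supremum over $\|f\|_{\infty}\le 1$ produces the claimed bound for $\big\|\mathcal{F}^{\alpha}_{\Delta,D}\big\|$.

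I expect no serious obstacle here; the only step requiring genuine care is the linearity argument, where one must confirm that substituting the linear combination into \eqref{alphafrac} regenerates the correct self-referential identity — it is precisely the linearity of $D$ that makes the $s$-terms combine correctly — after which uniqueness of the fixed point closes the case. The boundedness portion is then an immediate consequence of Theorem \ref{32} and the submultiplicativity of the operator norm for $D$.
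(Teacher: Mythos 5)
Your proposal is correct and follows essentially the same route as the paper: linearity via adding the two self-referential equations and invoking uniqueness of the fixed point of the RB operator, and boundedness by combining the perturbation estimate of Theorem \ref{32} with $\|f-Df\|_{\infty}\le\|Id-D\|\,\|f\|_{\infty}$ and the triangle inequality. The only cosmetic difference is that you make the intermediate bound $\|f-Df\|_{\infty}\le\|Id-D\|\,\|f\|_{\infty}$ explicit, whereas the paper absorbs it into the form of the estimate it carries over from Theorem \ref{32}.
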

\begin{proof}
 Let $f, g \in \mathcal{C}(I^k, \mathbb{R})$ and $\beta, \gamma \in \mathbb{R}$. For $(x_1,\dots,x_k) \in \overset{k}{\underset{q=1}{ \prod}} I_{q,j_q},~ j_q \in  \Sigma_{N_q},~q \in   \Sigma_k.$ \\
 We have
\begin{equation*}
\begin{aligned}
 \beta f_{\Delta, D}^\alpha(x_1,x_2,\dots,x_k)=&\beta f(x_1,x_2,\dots,x_k)\\&+\beta \alpha(x_1,x_2,\dots,x_k)\left[f_{\Delta, D}^\alpha\left(v_{j,j_1}^{-1}(x_1),\ldots, v_{k,j_k}^{-1}(x_k)\right)\right.\\
 &-(D f)\left.\left(v_{j,j_1}^{-1}(x_1),\ldots, v_{k,j_k}^{-1}(x_k)\right)\right],
 \end{aligned}
\end{equation*} 
\begin{equation*}
\begin{aligned}
\gamma g_{\Delta, D}^\alpha(x_1,x_2,\dots,x_k)=&\gamma g(x_1,x_2,\dots,x_k)\\&+\gamma \alpha(x_1,x_2,\dots,x_k)\left[g_{\Delta, D}^\alpha\left(v_{j,j_1}^{-1}(x_1),\ldots, v_{k,j_k}^{-1}(x_k)\right)\right.\\&-(D g)\left.\left(v_{j,j_1}^{-1}(x_1),\ldots, v_{k,j_k}^{-1}(x_k)\right)\right] .
\end{aligned}
\end{equation*}
Adding the above two equations, one gets
\begin{equation*}
\begin{aligned}
\left(\beta f_{\Delta, D}^\alpha+\gamma g_{\Delta, D}^\alpha\right)(x_1,x_2,\dots,x_k)= & (\beta f+\gamma g)(x_1,x_2,\dots,x_k)\\&+\alpha(x_1,x_2,\dots,x_k)
 \left(\beta f_{\Delta, D}^\alpha+\gamma g_{\Delta, D}^\alpha\right.\\
 &\left.-D(\beta f+\gamma g)\right))\left(v_{j,j_1}^{-1}(x_1),\ldots, v_{k,j_k}^{-1}(x_k)\right).
\end{aligned}
\end{equation*}
The previous equation reveals that $\beta f_{\Delta, D}^\alpha+\gamma g_{\Delta, D}^\alpha$ is the fixed point of the RB-operator
\begin{equation*}
\begin{aligned}
(T h)(x_1,x_2,\dots,x_k)= & \alpha(x_1,x_2,\dots,x_k) h\left(v_{j,j_1}^{-1}(x_1),\ldots, v_{k,j_k}^{-1}(x_k)\right)\\&+(\beta f+\gamma g)(x_1,x_2,\dots,x_k) \\
& -\alpha(x_1,x_2,\dots,x_k) L(\beta f+\gamma g)\left(v_{j,j_1}^{-1}(x_1),\ldots, v_{k,j_k}^{-1}(x_k)\right),
\end{aligned}
\end{equation*}
Since the fixed point of the RB operator is unique, we have $(\beta f+\gamma g)_{\Delta, D}^\alpha=$ $\beta f_{\Delta, D}^\alpha+\gamma g_{\Delta, D}^\alpha$, which reveals the linearity of the operator $\mathcal{F}_{\Delta, D}^\alpha$. From the previous item we write
$$
\left\|f_{\Delta, D}^\alpha\right\|_{\infty}-\|f\|_{\infty} \leq \frac{\|\alpha\|_{\infty}\|I d-D\|}{1-\|\alpha\|_{\infty}}\|f\|_{\infty} .
$$
It implies that
$$
\left\|\mathcal{F}_{\Delta, D}^\alpha(f)\right\|_{\infty} \leq\|f\|_{\infty}+\frac{\|\alpha\|_{\infty}\|I d-D\|}{1-\|\alpha\|_{\infty}}\|f\|_{\infty} .
$$
Therefore $\mathcal{F}_{\Delta, D}^\alpha$ is a bounded linear operator.\\   
\end{proof}
\begin{corollary} \label{34}          Let $\|\alpha\|_{\infty}=\sup \{|\alpha(x_1,x_2,$\dots$,x_k)|:(x_1,x_2,$\dots$, x_k)\in I^k\}$. For $\|\alpha\|_{\infty}<\|D\|^{-1}, \mathcal{F}_{\Delta, D}^\alpha$ is bounded below. In particular, $\mathcal{F}_{\Delta, D}^\alpha$ is one to one.
\end{corollary}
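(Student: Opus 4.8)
The plan is to produce a positive lower bound of the form $\|\mathcal{F}^{\alpha}_{\Delta,D}(f)\|_\infty \ge c\,\|f\|_\infty$ for all $f \in \mathcal{C}(I^k,\mathbb{R})$, with the constant $c>0$ precisely when $\|\alpha\|_\infty < \|D\|^{-1}$; injectivity will then follow immediately from the linearity already established in Theorem \ref{33}. First I would recall the inequality \eqref{9} obtained inside the proof of Theorem \ref{32}, namely $\|f^{\alpha}_{\Delta,D}-f\|_\infty \le \|\alpha\|_\infty\,\|f^{\alpha}_{\Delta,D}-Df\|_\infty$. The decisive move is to split the right-hand factor \emph{directly} rather than through $\|f-Df\|_\infty$ as was done in Theorem \ref{32}; using the triangle inequality and boundedness of $D$,
$$\left\|f^{\alpha}_{\Delta,D}-Df\right\|_\infty \le \left\|f^{\alpha}_{\Delta,D}\right\|_\infty + \|Df\|_\infty \le \left\|f^{\alpha}_{\Delta,D}\right\|_\infty + \|D\|\,\|f\|_\infty .$$
This particular decomposition is what brings the operator norm $\|D\|$ (and not $\|Id-D\|$) into the estimate, so that the threshold will match the hypothesis.

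Next I would invoke the reverse triangle inequality $\|f\|_\infty - \|f^{\alpha}_{\Delta,D}\|_\infty \le \|f^{\alpha}_{\Delta,D}-f\|_\infty$ and chain it with the two displays above to get
$$\|f\|_\infty - \left\|f^{\alpha}_{\Delta,D}\right\|_\infty \le \|\alpha\|_\infty\left\|f^{\alpha}_{\Delta,D}\right\|_\infty + \|\alpha\|_\infty\,\|D\|\,\|f\|_\infty .$$
Collecting the $\|f^{\alpha}_{\Delta,D}\|_\infty$ terms on one side and the $\|f\|_\infty$ terms on the other yields
$$\big(1+\|\alpha\|_\infty\big)\left\|f^{\alpha}_{\Delta,D}\right\|_\infty \ge \big(1-\|\alpha\|_\infty\,\|D\|\big)\|f\|_\infty ,$$
so that
$$\left\|\mathcal{F}^{\alpha}_{\Delta,D}(f)\right\|_\infty = \left\|f^{\alpha}_{\Delta,D}\right\|_\infty \ge \frac{1-\|\alpha\|_\infty\,\|D\|}{1+\|\alpha\|_\infty}\,\|f\|_\infty .$$

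The hypothesis $\|\alpha\|_\infty < \|D\|^{-1}$ is exactly the condition forcing $1-\|\alpha\|_\infty\,\|D\|>0$, so the constant $c=\frac{1-\|\alpha\|_\infty\|D\|}{1+\|\alpha\|_\infty}$ is strictly positive and $\mathcal{F}^{\alpha}_{\Delta,D}$ is bounded below. Finally, since $\mathcal{F}^{\alpha}_{\Delta,D}$ is linear by Theorem \ref{33}, a bounded-below estimate forces injectivity: if $\mathcal{F}^{\alpha}_{\Delta,D}(f)=0$ then $c\,\|f\|_\infty \le 0$, whence $f=0$. The one delicate point I anticipate is not any calculation but the \emph{choice} of how to split $\|f^{\alpha}_{\Delta,D}-Df\|_\infty$: it must isolate $\|D\|\,\|f\|_\infty$ so that the lower-bound constant degenerates exactly at $\|\alpha\|_\infty=\|D\|^{-1}$; routing through $\|f-Df\|_\infty$ (as Theorem \ref{32} does) would instead produce the wrong threshold $\|Id-D\|^{-1}$ and would not match the statement.
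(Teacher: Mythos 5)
Your proposal is correct and follows essentially the same route as the paper: both start from inequality \eqref{9}, bound $\left\|f^{\alpha}_{\Delta,D}-Df\right\|_{\infty}$ by $\left\|f^{\alpha}_{\Delta,D}\right\|_{\infty}+\|D\|\,\|f\|_{\infty}$, apply the reverse triangle inequality, and rearrange to obtain $\left(1-\|\alpha\|_{\infty}\|D\|\right)\|f\|_{\infty}\leq\left(1+\|\alpha\|_{\infty}\right)\left\|f^{\alpha}_{\Delta,D}\right\|_{\infty}$. Your explicit remark about why one must split through $\|D\|\,\|f\|_{\infty}$ rather than $\|f-Df\|_{\infty}$, and your spelled-out injectivity step, are just slightly more detailed versions of what the paper leaves implicit.
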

\begin{proof}
 From Equation (\ref{9})
\begin{equation*}
\begin{aligned}
\|f\|_{\infty}-\left\|f_{\Delta, D}^\alpha\right\|_{\infty} \leq\left\|f_{\Delta, D}^\alpha-f\right\|_{\infty} & \leq\|\alpha\|_{\infty}\left\|f_{\Delta, D}^\alpha-D f\right\|_{\infty} \\
& \leq\|\alpha\|_{\infty}\left(\left\|f_{\Delta, D}^\alpha\right\|_{\infty}+\|L\|\|f\|_{\infty}\right) .
\end{aligned}
\end{equation*}
Hence we get $\left(1-\|\alpha\|_{\infty}\|D\|\right)\|f\|_{\infty} \leq\left(1+\|\alpha\|_{\infty}\right)\left\|f_{\Delta, D}^\alpha\right\|_{\infty}$. If $\|\alpha\|_{\infty}<\|D\|^{-1}$, then
\begin{equation}\label{10}
    \begin{aligned}
     \|f\|_{\infty} \leq \frac{1+\|\alpha\|_{\infty}}{1-\|\alpha\|_{\infty}\|D\|}\left\|f_{\Delta, D}^\alpha\right\|_{\infty} . 
    \end{aligned}
\end{equation}

Thus $\mathcal{F}_{\Delta, D}^\alpha$ is bounded below.\\   
\end{proof}

\begin{lemma}\label{lem}
If $T$ is a bounded linear operator from a Banach space into itself such that $\|T\|< 1,$ then $(Id-T)^{-1}$ exists and it is bounded.
\end{lemma}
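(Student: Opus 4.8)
The plan is to construct the inverse explicitly as the Neumann series $\sum_{n=0}^{\infty} T^n$ and to show that this series converges in the operator norm. First I would recall that the space $\mathcal{B}(X)$ of bounded linear operators on the Banach space $X$, equipped with the operator norm, is itself a Banach space — it is precisely the completeness of $X$ that makes $\mathcal{B}(X)$ complete — and that the operator norm is submultiplicative, so that $\|T^n\|\le \|T\|^n$ for every $n\in\mathbb{N}$.

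Next I would consider the partial sums $S_N:=\sum_{n=0}^{N} T^n$. Using submultiplicativity together with the hypothesis $\|T\|<1$, the tail estimate $\sum_{n=N+1}^{M}\|T^n\|\le \sum_{n=N+1}^{M}\|T\|^n$ is dominated by the convergent geometric series $\sum_{n=0}^{\infty}\|T\|^n=(1-\|T\|)^{-1}$. This shows that $(S_N)$ is a Cauchy sequence in $\mathcal{B}(X)$, hence it converges to some $S\in\mathcal{B}(X)$ in the operator norm, with the bound $\|S\|\le (1-\|T\|)^{-1}$; in particular $S$ is bounded.

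Finally I would verify that $S=(Id-T)^{-1}$. The telescoping identity $(Id-T)S_N=S_N(Id-T)=Id-T^{N+1}$ follows at once from the definition of $S_N$. Since $\|T\|<1$ forces $\|T^{N+1}\|\le\|T\|^{N+1}\to 0$, and since composition of operators is continuous with respect to the operator norm, letting $N\to\infty$ gives $(Id-T)S=S(Id-T)=Id$. Hence $Id-T$ is invertible and its inverse $S$ is bounded, which is the assertion.

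The only point requiring genuine care — and the step I expect to lean on most — is the completeness of $\mathcal{B}(X)$, which is what guarantees that the Cauchy sequence of partial sums actually converges to an operator; the remaining verifications are routine applications of submultiplicativity, the geometric series estimate, and the continuity of operator multiplication.
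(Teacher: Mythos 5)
Your proof is correct and complete: it is the standard Neumann series argument, and every step (completeness of $\mathcal{B}(X)$, submultiplicativity, the geometric tail estimate, and the telescoping identity $(Id-T)S_N = Id - T^{N+1}$) is in order. The paper itself states this lemma without proof, treating it as a well-known fact from functional analysis, so there is no alternative argument to compare against; yours is the canonical one.
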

\begin{corollary}\label{36}        Let $\|\alpha\|_{\infty}=\sup \{|\alpha(x_1,x_2,$\dots$,x_k)|:(x_1,x_2,$\dots$, x_k)\in I^k\}$, and let $Id$ be the identity operator on $\mathcal{C}(I^k , \mathbb{R})$.
If $\|\alpha\|_{\infty}<(1+\|I d-D\|)^{-1}$, then $\mathcal{F}_{\Delta, D}^\alpha$ has a bounded inverse and consequently a topological automorphism (i.e., a bijective bounded linear map with a bounded inverse from $\mathcal{C}(I^k, \mathbb{R})$ to itself). Moreover,
$$
\left\|\left(\mathcal{F}_{\Delta, D}^\alpha\right)^{-1}\right\| \leq \frac{1+\|\alpha\|_{\infty}}{1-\|\alpha\|_{\infty}\|D\|} .
$$ 
\end{corollary}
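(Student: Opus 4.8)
The plan is to write $\mathcal{F}_{\Delta, D}^\alpha = Id - \left(Id - \mathcal{F}_{\Delta, D}^\alpha\right)$ and apply Lemma \ref{lem} to the operator $T := Id - \mathcal{F}_{\Delta, D}^\alpha$. The first task is therefore to estimate the operator norm of $T$. For an arbitrary $f \in \mathcal{C}(I^k, \mathbb{R})$ one has $T(f) = f - f_{\Delta, D}^\alpha$, and Theorem \ref{32} combined with the elementary bound $\|f - Df\|_\infty = \|(Id - D)f\|_\infty \le \|Id - D\|\,\|f\|_\infty$ yields
$$
\|T(f)\|_\infty = \left\|f - f_{\Delta, D}^\alpha\right\|_\infty \le \frac{\|\alpha\|_\infty}{1 - \|\alpha\|_\infty}\,\|(Id-D)f\|_\infty \le \frac{\|\alpha\|_\infty \|Id - D\|}{1 - \|\alpha\|_\infty}\,\|f\|_\infty .
$$
Taking the supremum over $\|f\|_\infty \le 1$ gives $\|T\| \le \frac{\|\alpha\|_\infty \|Id - D\|}{1 - \|\alpha\|_\infty}$.

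Next I would check that the hypothesis $\|\alpha\|_\infty < (1 + \|Id - D\|)^{-1}$ is precisely the condition that forces this bound to be strictly less than $1$: indeed the inequality $\frac{\|\alpha\|_\infty \|Id - D\|}{1 - \|\alpha\|_\infty} < 1$ rearranges to $\|\alpha\|_\infty\left(1 + \|Id - D\|\right) < 1$. Hence $\|T\| < 1$, and Lemma \ref{lem} guarantees that $(Id - T)^{-1} = \left(\mathcal{F}_{\Delta, D}^\alpha\right)^{-1}$ exists and is bounded. Together with the linearity and boundedness of $\mathcal{F}_{\Delta, D}^\alpha$ already established in Theorem \ref{33}, this exhibits $\mathcal{F}_{\Delta, D}^\alpha$ as a bijective bounded linear map with bounded inverse, i.e.\ a topological automorphism of $\mathcal{C}(I^k, \mathbb{R})$.

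For the explicit norm estimate on the inverse I would \emph{not} use the Neumann bound $\|(Id-T)^{-1}\| \le (1 - \|T\|)^{-1}$, since that produces a different quantity; instead I would invoke the bounded-below estimate of Corollary \ref{34}. First observe that $\|D\| \le \|Id - D\| + \|Id\| = 1 + \|Id - D\|$, so the hypothesis forces $\|\alpha\|_\infty < (1 + \|Id - D\|)^{-1} \le \|D\|^{-1}$; thus Corollary \ref{34}, and in particular inequality \eqref{10}, is applicable. Given any $g \in \mathcal{C}(I^k, \mathbb{R})$, set $f := \left(\mathcal{F}_{\Delta, D}^\alpha\right)^{-1}(g)$, so that $g = f_{\Delta, D}^\alpha$. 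Applying \eqref{10} to this $f$ gives $\left\|\left(\mathcal{F}_{\Delta, D}^\alpha\right)^{-1}(g)\right\|_\infty = \|f\|_\infty \le \frac{1 + \|\alpha\|_\infty}{1 - \|\alpha\|_\infty \|D\|}\,\|g\|_\infty$, and taking the supremum over the unit ball delivers the claimed bound.

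The two norm computations are routine; the only point that genuinely needs care, and the main conceptual step, is recognizing that the existence of the inverse and its quantitative bound come from two different earlier results, namely Lemma \ref{lem} for invertibility and Corollary \ref{34} for the explicit estimate, and that one must verify that the hypothesis implies the weaker smallness condition $\|\alpha\|_\infty < \|D\|^{-1}$ required to invoke Corollary \ref{34}.
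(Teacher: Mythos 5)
Your proposal is correct and follows essentially the same route as the paper: bound $\|Id-\mathcal{F}_{\Delta,D}^\alpha\|$ by $\frac{\|\alpha\|_\infty\|Id-D\|}{1-\|\alpha\|_\infty}<1$, invoke Lemma \ref{lem} for invertibility, and read off the inverse's norm from the bounded-below inequality \eqref{10} of Corollary \ref{34}. In fact you are somewhat more careful than the paper, which asserts the operator-norm bound on $Id-\mathcal{F}_{\Delta,D}^\alpha$ without deriving it from Theorem \ref{32} and cites \eqref{10} without checking that the hypothesis implies $\|\alpha\|_\infty<\|D\|^{-1}$; your verification via $\|D\|\le 1+\|Id-D\|$ fills that gap.
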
 
\begin{proof}
 From the hypothesis $\left\|I d-\mathcal{F}_{\Delta, D}^\alpha\right\| \leq \frac{\|\alpha\|_{\infty}\|I d-D\|}{1-\|\alpha\|_{\infty}}<1$. Consequently, Lemma(\ref{lem})  dictates that $\mathcal{F}_{\Delta, D}^\alpha$ has a bounded inverse. From equation(\ref{10}), we infer that
$$
\left\|\left(\mathcal{F}_{\Delta, D}^\alpha\right)^{-1}(f)\right\|_{\infty} \leq \frac{1+\|\alpha\|_{\infty}}{1-\|\alpha\|_{\infty}\|D\|}\|f\|_{\infty}.
$$
Which in turn yields the required bound for the operator norm of $\left(\mathcal{F}_{\Delta, D}^\alpha\right)^{-1}$.\\
\end{proof}
\begin{proposition} \label{37}                  Let $\|\alpha\|_{\infty}=\sup \{|\alpha(x_1,x_2,$\dots$,x_k)|:(x_1,x_2,$\dots$, x_k)\in I^k\}$. If $\|\alpha\|_{\infty} \neq 0$, then the fixed points of $L$ are the fixed points of the fractal operator $\mathcal{F}_{\Delta, D}^\alpha$ as well.
\end{proposition}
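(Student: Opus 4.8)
The plan is to read the statement off directly from the perturbation estimate already proved in Theorem \ref{32}, observing that a fixed point of the operator $D$ (the paper's ``$L$'' and ``$D$'' denote the same fixed bounded linear operator) makes the driving quantity $\|f-Df\|_\infty$ vanish. Concretely, let $f\in\mathcal{C}(I^k,\mathbb{R})$ satisfy $Df=f$. Then $\|f-Df\|_\infty=0$, and Theorem \ref{32} gives
$$\left\|f^\alpha_{\Delta,D}-f\right\|_\infty\leq\frac{\|\alpha\|_\infty}{1-\|\alpha\|_\infty}\|f-Df\|_\infty=0,$$
so $f^\alpha_{\Delta,D}=f$, i.e.\ $\mathcal{F}^\alpha_{\Delta,D}(f)=f$. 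Since $\mathcal{F}^\alpha_{\Delta,D}(f)=f^\alpha_{\Delta,D}$ by definition, this shows every fixed point of $D$ is a fixed point of the fractal operator, which is exactly the claim.

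For a self-contained argument that does not quote Theorem \ref{32}, I would instead substitute $s=Df=f$ into the self-referential equation \eqref{alphafrac} and set $g:=f^\alpha_{\Delta,D}-f$. On each subrectangle the equation then collapses to $g(x_1,\ldots,x_k)=\alpha(x_1,\ldots,x_k)\,g\!\left(v_{1,j_1}^{-1}(x_1),\ldots,v_{k,j_k}^{-1}(x_k)\right)$, whence $\|g\|_\infty\leq\|\alpha\|_\infty\|g\|_\infty$. As $\|\alpha\|_\infty<1$, this forces $(1-\|\alpha\|_\infty)\|g\|_\infty\leq 0$, so $g\equiv 0$ and $f^\alpha_{\Delta,D}=f$ again. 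This is really just Theorem \ref{32} with its proof inlined for the special case $s=Df$.

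The only points requiring care here are notational rather than mathematical. First, I would note explicitly that the ``$L$'' appearing in the statement is the same operator as the ``$D$'' indexing $\mathcal{F}^\alpha_{\Delta,D}$. Second, I would remark that the hypothesis $\|\alpha\|_\infty\neq 0$ is not actually needed for this inclusion: when $\|\alpha\|_\infty=0$, Theorem \ref{32} already yields $\mathcal{F}^\alpha_{\Delta,D}=Id$, so that every function—and in particular every fixed point of $D$—is trivially fixed; the hypothesis merely excludes this degenerate case. There is no genuine obstacle, as the result is a one-line corollary of the perturbation bound; the one thing to verify is simply that the substitution $s=Df$ is precisely the one used to define $\mathcal{F}^\alpha_{\Delta,D}$, so that \eqref{alphafrac} applies verbatim.
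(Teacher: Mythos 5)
Your proposal is correct and follows essentially the same route as the paper: the paper applies its inequality (3.9), $\left\|f_{\Delta,D}^{\alpha}-f\right\|_{\infty}\leq\|\alpha\|_{\infty}\left\|f_{\Delta,D}^{\alpha}-Df\right\|_{\infty}$, with $Df=f$ and concludes from $\|\alpha\|_{\infty}<1$, which is the same one-line consequence of the perturbation estimate that you draw (your inlined self-referential argument is just the proof of that inequality specialized to $s=Df=f$). Your side remarks, that $L$ and $D$ denote the same operator and that the hypothesis $\|\alpha\|_{\infty}\neq 0$ is not actually needed, are both accurate.
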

\begin{proof}
Let $\|\alpha\|_{\infty} \neq 0$, and $f$ be a fixed point of $L$. From equation(\ref{9})
$$
\left\|f_{\Delta, D}^\alpha-f\right\|_{\infty} \leq\|\alpha\|_{\infty}\left\|f_{\Delta, D}^\alpha-f\right\|_{\infty} .
$$
Since $\|\alpha\|_{\infty}<1$, this implies that $f_{\Delta, D}^\alpha=f$.\\
\end{proof}
 
\begin{lemma}\label{lema32}
    If $T$ is a bounded linear operator and $S$ is a compact operator on
a normed linear space $X$, then $T S$ and $ST$ are compact operators.
\end{lemma}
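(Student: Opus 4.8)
The plan is to use the sequential characterization of compactness: a linear operator $S$ on $X$ is compact precisely when, for every bounded sequence $\{x_n\}$ in $X$, the image sequence $\{Sx_n\}$ admits a convergent subsequence. Since $X$ is a normed linear space, hence a metric space, this characterization is available and is the most economical route. I would treat $TS$ and $ST$ separately, in each case starting from an arbitrary bounded sequence and extracting a convergent subsequence of its image under the composition.

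For $TS$, I would fix a bounded sequence $\{x_n\}$ in $X$. Because $S$ is compact, the sequence $\{Sx_n\}$ has a subsequence $\{Sx_{n_k}\}$ converging to some $y \in X$. The operator $T$, being bounded, is continuous and therefore preserves sequential limits; hence $\{T(Sx_{n_k})\} = \{(TS)x_{n_k}\}$ converges to $Ty$. This exhibits a convergent subsequence of $\{(TS)x_n\}$, which establishes the compactness of $TS$.

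For $ST$, I would again fix a bounded sequence $\{x_n\}$. Since $T$ is bounded, the sequence $\{Tx_n\}$ is bounded. The compactness of $S$ then applies directly to this bounded sequence, so $\{S(Tx_n)\} = \{(ST)x_n\}$ has a convergent subsequence. This establishes the compactness of $ST$.

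The argument presents no genuine obstacle; the only point that requires care is pairing the correct property with the correct factor in each composition, namely using continuity of the bounded operator when it is applied last (as in $TS$) and using boundedness to preserve boundedness of the sequence when it is applied first (as in $ST$). An equivalent formulation in terms of bounded sets, relying on the facts that a continuous image of a relatively compact set is relatively compact and that a bounded operator carries bounded sets to bounded sets, would yield the same conclusion for anyone preferring to avoid sequences.
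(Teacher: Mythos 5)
Your proof is correct: the sequential characterization of compactness, applied with continuity of $T$ after extracting the subsequence for $TS$ and with boundedness of $T$ before invoking compactness of $S$ for $ST$, is the standard argument. The paper states this lemma without proof (it is quoted as a known fact and used only in Proposition \ref{39}), so there is nothing to compare against; your write-up supplies exactly the routine justification the authors omit.
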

\begin{proposition}\label{39}
For $\|\alpha\|_{\infty}<\|D\|^{-1}$, the fractal operator $\mathcal{F}_{\Delta, D}^\alpha$ is not a compact operator.   
\end{proposition}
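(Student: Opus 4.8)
The plan is to combine two facts that are already available: first, that $\mathcal{F}_{\Delta,D}^\alpha$ is bounded below under the standing hypothesis $\|\alpha\|_\infty<\|D\|^{-1}$, which is precisely the content of Corollary~\ref{34} and its estimate \eqref{10}; and second, that $\mathcal{C}(I^k,\mathbb{R})$ is an infinite-dimensional Banach space. The unifying principle I would invoke is elementary and standard: a bounded linear operator that is bounded below on an infinite-dimensional normed space can never be compact, because it faithfully transmits the non-compactness of the closed unit ball to its image.

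Concretely, I would first restate \eqref{10} as a genuine lower bound. From $\|f\|_\infty\le \frac{1+\|\alpha\|_\infty}{1-\|\alpha\|_\infty\|D\|}\,\|f_{\Delta,D}^\alpha\|_\infty$ one reads off the constant
$$
m:=\frac{1-\|\alpha\|_\infty\|D\|}{1+\|\alpha\|_\infty}>0,\qquad \big\|\mathcal{F}_{\Delta,D}^\alpha f\big\|_\infty\ge m\,\|f\|_\infty \quad\text{for all } f\in\mathcal{C}(I^k,\mathbb{R}),
$$
and the positivity of $m$ is exactly the place where the assumption $\|\alpha\|_\infty<\|D\|^{-1}$ is used.

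Next I would argue by contradiction. Suppose $\mathcal{F}_{\Delta,D}^\alpha$ were compact. Since $\mathcal{C}(I^k,\mathbb{R})$ is infinite-dimensional, Riesz's lemma provides a sequence $(f_n)$ on the unit sphere with $\|f_n-f_m\|_\infty\ge \tfrac12$ whenever $n\ne m$. The bounded-below inequality then yields
$$
\big\|\mathcal{F}_{\Delta,D}^\alpha f_n-\mathcal{F}_{\Delta,D}^\alpha f_m\big\|_\infty=\big\|\mathcal{F}_{\Delta,D}^\alpha(f_n-f_m)\big\|_\infty\ge m\,\|f_n-f_m\|_\infty\ge \tfrac{m}{2},
$$
so the image sequence $\big(\mathcal{F}_{\Delta,D}^\alpha f_n\big)$ contains no Cauchy subsequence, and hence no convergent subsequence. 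As $(f_n)$ is bounded, this directly contradicts compactness, and therefore $\mathcal{F}_{\Delta,D}^\alpha$ is not compact.

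I do not expect a serious obstacle here; the two points that merely require care are (i) extracting the positive lower bound $m$ from \eqref{10}, where the restriction $\|\alpha\|_\infty<\|D\|^{-1}$ is indispensable, and (ii) justifying that $\mathcal{C}(I^k,\mathbb{R})$ is genuinely infinite-dimensional (it contains the linearly independent family of coordinate monomials), so that the closed unit ball fails to be compact. An equivalent packaging, if one prefers to avoid Riesz's lemma explicitly, is to note that a bounded-below operator has closed range and is an isomorphism onto that range; were it also compact, the continuous inverse would carry the relatively compact image of the unit ball back to a relatively compact unit ball, again contradicting infinite-dimensionality. Either route closes the argument.
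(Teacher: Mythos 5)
Your proof is correct, and your primary argument takes a genuinely different (and more elementary) route than the paper's. The paper also starts from the bounded-below property of Corollary~\ref{34}, but then forms the inverse $\left(\mathcal{F}_{\Delta, D}^\alpha\right)^{-1}$ on the range, invokes Lemma~\ref{lema32} (a bounded operator composed with a compact one is compact) to conclude that the identity on the infinite-dimensional range $\mathcal{F}_{\Delta, D}^\alpha(\mathcal{C}(I^k,\mathbb{R}))$ would be compact, and derives the contradiction from the non-compactness of the unit ball there --- this is exactly the ``equivalent packaging'' you sketch at the end of your proposal. Your main argument instead works directly on the domain: Riesz's lemma gives a $\tfrac12$-separated sequence on the unit sphere, and the explicit lower bound $m=\frac{1-\|\alpha\|_\infty\|D\|}{1+\|\alpha\|_\infty}>0$ extracted from \eqref{10} keeps the images $\tfrac{m}{2}$-separated, so no subsequence of the image can converge. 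What your route buys is self-containedness (no appeal to Lemma~\ref{lema32}, no need to discuss the inverse map or the closedness of the range) and an explicit quantitative constant; what the paper's route buys is brevity once the composition lemma is in hand. You are also more careful than the paper on one point: you actually justify that the relevant space is infinite dimensional (via the coordinate monomials), whereas the paper simply asserts that the range is infinite dimensional, a claim that itself relies on the injectivity established in Corollary~\ref{34}.
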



\begin{proof}

For $\|\alpha\|_{\infty}<\|D\|^{-1}$, we know that $\mathcal{F}_{\Delta, D}^\alpha: \mathcal{C}(I^k, \mathbb{R}) \rightarrow \mathcal{C}(I^k, \mathbb{R})$ is one-one. Note that the range space of $\mathcal{F}_{\Delta, D}^\alpha$ is infinite dimensional. We define the inverse map $\left(\mathcal{F}_{\Delta, D}^\alpha\right)^{-1}: \mathcal{F}_{\Delta, D}^\alpha(\mathcal{C}(I^k, \mathbb{R})) \rightarrow \mathcal{C}(I^k, \mathbb{R})$. With this choice of $\alpha, ~\mathcal{F}_{\Delta, D}^\alpha$ is bounded below, and hence it follows that $\left(\mathcal{F}_{\Delta, D}^\alpha\right)^{-1}$ is a bounded linear operator. Assume that $\mathcal{F}_{\Delta, D}^\alpha$ is a compact operator. Then by Lemma (\ref{lema32}), we deduce that the operator $T=\mathcal{F}_{\Delta, D}^\alpha\left(\mathcal{F}_{\Delta, D}^\alpha\right)^{-1}: \mathcal{F}_{\Delta, D}^\alpha(\mathcal{C}(I^k, \mathbb{R})) \rightarrow \mathcal{C}(I^k, \mathbb{R})$ is a compact operator, which is a contradiction to the infinite dimensionality of the space $\mathcal{F}_{\Delta, D}^\alpha(\mathcal{C}(I^k, \mathbb{R}))$. Therefore, $\mathcal{F}_{\Delta, D}^\alpha$ is not a compact operator.\\\\
\end{proof}
\begin{remark}
If $\alpha$ is a non-zero constant function in $I^k$ and $f$ is a fixed point of $\mathcal{F}_{\Delta, D}^\alpha$, then $f$ is a fixed point of $D$ as well. This can be easily seen as follows. Here $\alpha(x, y)=\alpha \neq 0$, a constant function on $I^k$. Now let $f$ be a fixed point of $\mathcal{F}_{\Delta, D}^\alpha$, that is, $f_{\Delta, D}^\alpha=f$. For $(x_1,x_2,\dots,x_k) \in I^k$, by the functional equation we have

\begin{equation*}
\begin{aligned}
f\left(v_{j,j_1}(x_1),\ldots, v_{k,j_k}(x_k)\right)=&f\left(v_{j,j_1}(x_1),\ldots, v_{k,j_k}(x_k)\right)\\&+\alpha[f(x_1,x_2,\dots,x_k)-L f(x_1,x_2,\dots,x_k)],
\end{aligned}
\end{equation*}
from which it follows that $Df=f$.
\end{remark}
\begin{theorem}

 Let $f \in \mathcal{C}(I^k, \mathbb{R})$.\\
\begin{enumerate}
    \item  If $\alpha_n \in \mathcal{C}(I^k, \mathbb{R})$ be such that $\left\|\alpha_n\right\|_{\infty}<1$ and $\alpha_n \rightarrow 0$ as $n \rightarrow \infty$. Then the corresponding sequence of $\alpha$-fractal functions $f_{\Delta, D}^{\alpha_n} \rightarrow f$ as $n \rightarrow \infty$.\\
\item If $D_n: \mathcal{C}(I^k, \mathbb{R}) \rightarrow \mathcal{C}(I^k, \mathbb{R})$ be a sequence of bounded linear operators such that $D_n f \neq f,~\left(D_nf\right)(x_{1,i_1},\ldots,x_{k,i_q})=f(x_{1,i_1},\ldots,x_{k,i_k}),$ for all $(i_1, \ldots,i_k) \in
\overset{k}{\underset{q=1}{ \prod}} \partial  \Sigma_{N_q,0}$  with respect to a net $\Delta$, and $D_n f \rightarrow f$. Then the corresponding sequence of $\alpha$-fractal functions $f_{\Delta, D_n}^\alpha \rightarrow f$ as $n \rightarrow \infty$ for any fixed admissible choice of the scale function $\alpha$.
\end{enumerate}
\end{theorem}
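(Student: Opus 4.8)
The plan is to obtain both assertions as immediate consequences of the perturbation estimate of Theorem~\ref{32}, namely
\[
\left\|f^\alpha_{\Delta,D}-f\right\|_\infty \le \frac{\|\alpha\|_\infty}{1-\|\alpha\|_\infty}\,\|f-Df\|_\infty ,
\]
which holds whenever $\|\alpha\|_\infty<1$ and the seed $s=Df$ is admissible (that is, it agrees with $f$ at the interpolation nodes). Each part then amounts to letting one factor on the right-hand side tend to zero while the other stays controlled, so no fresh fixed-point or contraction argument is required; I would simply invoke Theorem~\ref{32} in the two regimes and pass to the limit.

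For part~(1) I would keep the operator $D$ fixed and set $\alpha=\alpha_n$ in the estimate above, getting
\[
\left\|f^{\alpha_n}_{\Delta,D}-f\right\|_\infty \le \frac{\|\alpha_n\|_\infty}{1-\|\alpha_n\|_\infty}\,\|f-Df\|_\infty .
\]
Here $\|f-Df\|_\infty$ is a fixed finite quantity independent of $n$. Since $\alpha_n\to 0$ in $\mathcal{C}(I^k,\mathbb{R})$ means precisely that $\|\alpha_n\|_\infty\to 0$, for all large $n$ we have $\|\alpha_n\|_\infty\le \tfrac12$, whence $\frac{\|\alpha_n\|_\infty}{1-\|\alpha_n\|_\infty}\le 2\|\alpha_n\|_\infty\to 0$. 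The right-hand side therefore converges to $0$, which gives $f^{\alpha_n}_{\Delta,D}\to f$ uniformly.

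For part~(2) I would instead fix an admissible scale function $\alpha$, so that $\|\alpha\|_\infty<1$ is a single fixed number, and substitute $D=D_n$, obtaining
\[
\left\|f^{\alpha}_{\Delta,D_n}-f\right\|_\infty \le \frac{\|\alpha\|_\infty}{1-\|\alpha\|_\infty}\,\|f-D_n f\|_\infty .
\]
Now the prefactor $\frac{\|\alpha\|_\infty}{1-\|\alpha\|_\infty}$ is a fixed constant, while the hypothesis $D_n f\to f$ says exactly that $\|f-D_n f\|_\infty\to 0$, so the bound again forces $f^\alpha_{\Delta,D_n}\to f$. The one step needing genuine care—rather than a routine estimate—is checking that Theorem~\ref{32} may legitimately be applied for every $D_n$: its construction presupposes that the seed $s=D_n f$ interpolates $f$ at the boundary nodes, and this is guaranteed precisely by the stated matching condition $(D_n f)(x_{1,i_1},\dots,x_{k,i_k})=f(x_{1,i_1},\dots,x_{k,i_k})$ on $\overset{k}{\underset{q=1}{\prod}}\partial\Sigma_{N_q,0}$, so each $f^\alpha_{\Delta,D_n}$ is well defined. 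The auxiliary assumption $D_n f\ne f$ merely excludes the degenerate case in which $f^\alpha_{\Delta,D_n}$ already coincides with $f$, and is not used in the passage to the limit.
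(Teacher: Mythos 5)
Your proposal is correct and follows exactly the route the paper takes: the paper's own proof simply quotes the perturbation bound $\left\|f_{\Delta,D}^{\alpha}-f\right\|_{\infty}\le \frac{\|\alpha\|_{\infty}}{1-\|\alpha\|_{\infty}}\|f-Df\|_{\infty}$ from Theorem~\ref{32} and states that both limits follow, which is precisely your argument with the details (letting $\|\alpha_n\|_\infty\to0$ in part (1) and $\|f-D_nf\|_\infty\to0$ in part (2)) written out. Your additional remark that the interpolation condition on $D_nf$ is what makes each $f^{\alpha}_{\Delta,D_n}$ well defined is a useful clarification the paper leaves implicit.
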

\begin{proof}
From $item (1)$ in the previous theorem, we note that the uniform error
bounds for the process of approximation of $f$ with $f_{\Delta, D_n}^{\alpha}$ is given by
$$
\left\|f_{\Delta, D_n}^\alpha-f\right\|_{\infty} \leq \frac{\|\alpha\|_{\infty}}{1-\|\alpha\|_{\infty}}\|f-D f\|_{\infty} .
$$
From this, the required results can be deduced.
\end{proof}

\begin{example}


We plot the $\alpha$- fractal function of the original function on the domain $[-1, 1]\times [-1,1]$. The 
      $\Delta $ is taken by the partition $\{-1, -0.5, 0, 0.5, 1\}$ of $[-1, 1]$. We choose the different scaling function $\alpha(x, y)$ corresponding to the original bivariate function $f(x,y)=exp(-x^2 - y^2) $ and base function $s(x,y)=x^2 y^2 f(x,y)$.

\begin{figure}
\includegraphics[height=9.5cm,width=13.5cm]{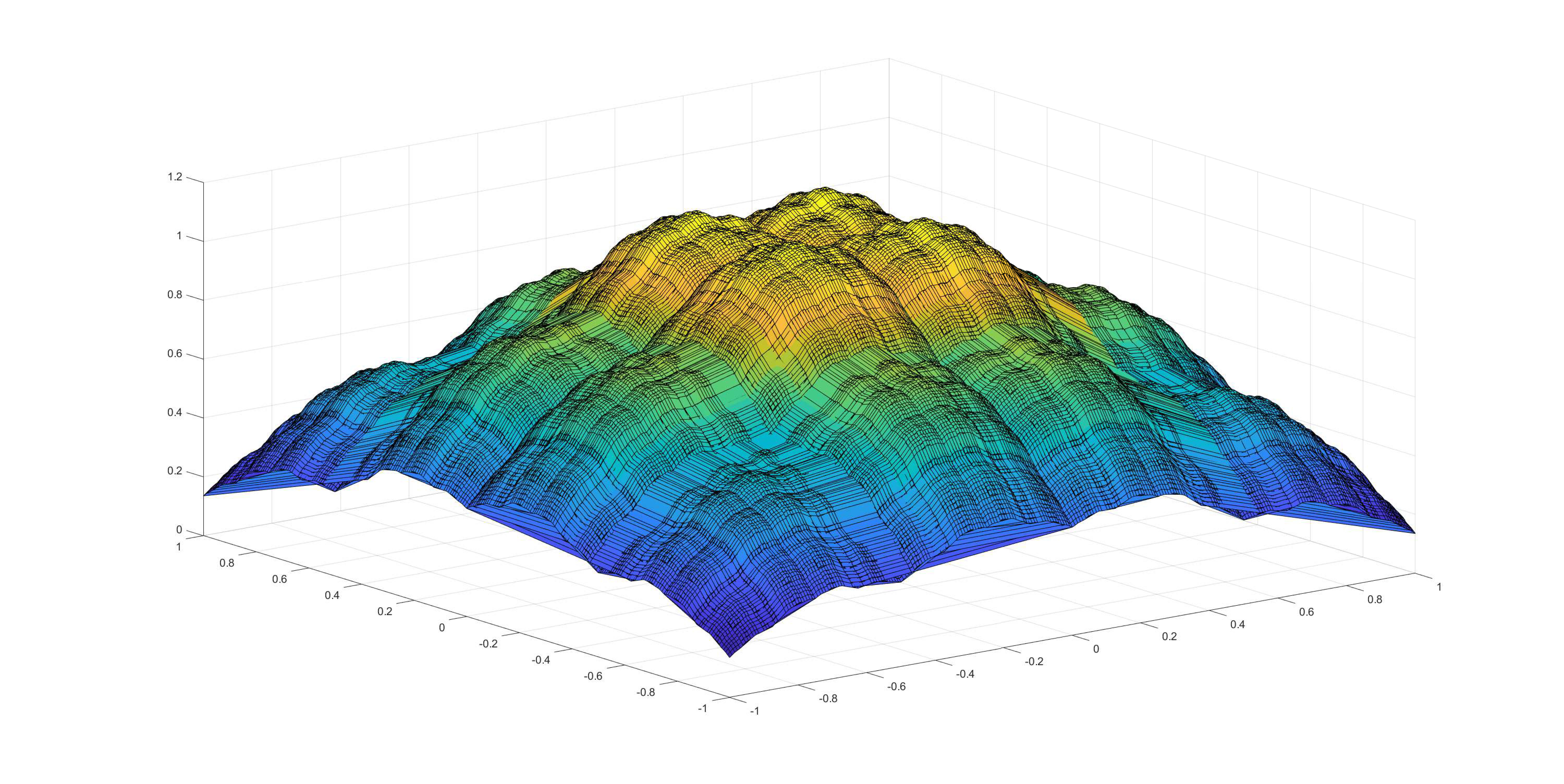}
		\caption{Fractal surface $f^{\alpha}$ when $\alpha=0.2, f(x,y)=exp(-x^2 - y^2), s(x,y)= x^2  y^2  f(x,y)$.}
		\label{Fig1}
\end{figure}

	\begin{figure}
\includegraphics[height=9.5cm,width=13.5cm]{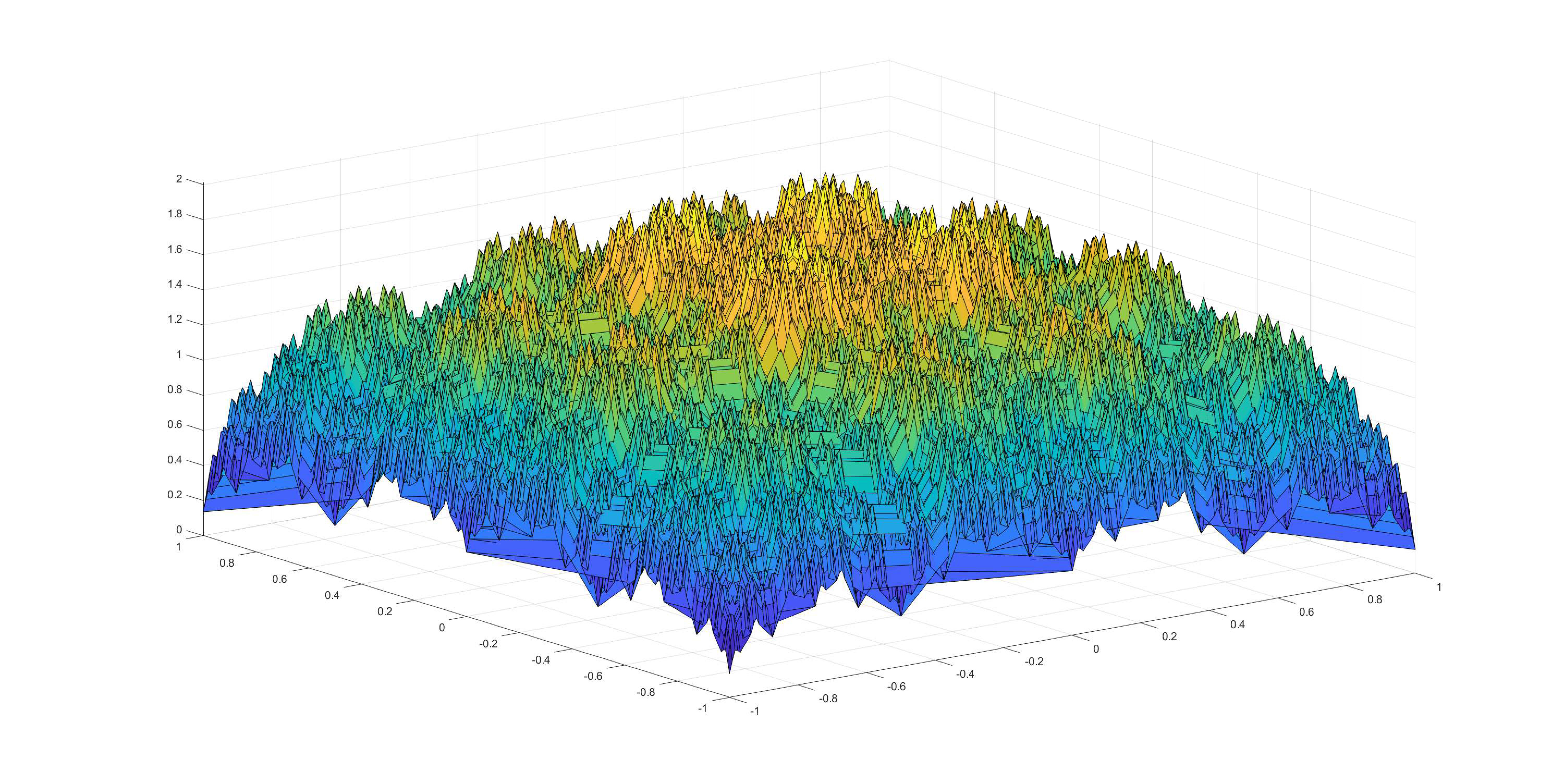}
		\caption{Fractal surface $f^{\alpha}$ when $\alpha=0.6, f(x,y)=exp(-x^2 - y^2), s(x,y)= x^2  y^2  f(x,y)$.}
		\label{Fig2}
\end{figure}

\end{example}

\begin{theorem}
 There exists a non-trivial closed invariant subspace for the fractal operator $\mathcal{F}_{\Delta, D}^\alpha: \mathcal{C}(I^k, \mathbb{R}) \rightarrow \mathcal{C}(I^k, \mathbb{R})$   
\end{theorem}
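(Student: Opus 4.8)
The plan is to exhibit an explicit non-trivial closed subspace that is carried into itself by $\mathcal{F}_{\Delta, D}^\alpha$, rather than to invoke an abstract invariant-subspace theorem — the latter being delicate here, since Proposition~\ref{39} shows the operator is not compact. The natural candidate is the subspace of continuous functions vanishing at every interpolation node of the fixed net $\Delta$. Writing the node set as $\mathcal{N}=\{(x_{1,j_1},\ldots,x_{k,j_k}): (j_1,\ldots,j_k)\in \prod_{q=1}^{k}\Sigma_{N_q,0}\}$, I would set
$$Y=\{f\in \mathcal{C}(I^k,\mathbb{R}): f(p)=0 \text{ for every } p\in \mathcal{N}\}.$$

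First I would record that $Y$ is a closed subspace: it equals $\bigcap_{p\in\mathcal{N}}\ker\delta_p$, where each evaluation functional $\delta_p(f)=f(p)$ is bounded and linear on $(\mathcal{C}(I^k,\mathbb{R}),\|\cdot\|_\infty)$, so $Y$ is an intersection of closed hyperplanes. Then I would check non-triviality: since $\mathcal{N}$ is finite, $Y$ contains nonzero functions (any continuous function supported away from $\mathcal{N}$), so $Y\neq\{0\}$; while the constant function $\mathbf{1}$ fails to vanish at the node $(x_{1,0},\ldots,x_{k,0})$, so $Y\neq \mathcal{C}(I^k,\mathbb{R})$.

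The crux is invariance. For $f\in Y$ the fractal counterpart $\mathcal{F}_{\Delta, D}^\alpha(f)=f^\alpha_{\Delta, D}$ is defined, because the standing compatibility hypothesis on $D$ — namely $(Df)(x_{1,i_1},\ldots,x_{k,i_k})=f(x_{1,i_1},\ldots,x_{k,i_k})$ for $(i_1,\ldots,i_k)\in\prod_{q=1}^{k}\partial\Sigma_{N_q,0}$ — makes $s=Df$ an admissible base function for every $f$. By the existence theorem for the multivariate $\alpha$-fractal function, $f^\alpha_{\Delta, D}$ interpolates $f$ at all nodes of $\Delta$, i.e. $f^\alpha_{\Delta, D}(p)=f(p)$ for all $p\in\mathcal{N}$. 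Since $f\in Y$ forces $f(p)=0$ for every $p\in\mathcal{N}$, we obtain $f^\alpha_{\Delta, D}(p)=0$ on $\mathcal{N}$, that is $\mathcal{F}_{\Delta, D}^\alpha(f)\in Y$. Combined with the two previous steps, this yields a non-trivial closed invariant subspace.

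I expect the only substantive point to be the invariance step, and within it the use of the \emph{full} interpolation identity — that $f^\alpha_{\Delta, D}$ matches $f$ at every node, not merely at the boundary nodes used to pin down the base function. Closedness and non-triviality are routine. If one instead wanted a subspace on which $\mathcal{F}_{\Delta, D}^\alpha$ acts as the identity, Proposition~\ref{37} shows the fixed-point set $\ker(Id-D)$ is invariant; but that space can be trivial, so the node-vanishing subspace $Y$ is the robust choice.
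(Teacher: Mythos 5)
Your proof is correct, and it rests on the same key fact as the paper's — the interpolation identity $f^{\alpha}_{\Delta,D}(p)=f(p)$ at \emph{every} node $p$ of the net $\Delta$ — but the invariant subspace you exhibit is genuinely different. The paper fixes a single non-zero $f$ vanishing on $\Delta$ and takes $M=\overline{Y_f}$, the closure of the cyclic subspace $\operatorname{span}\{f,\mathcal{F}_{\Delta,D}^{\alpha}(f),(\mathcal{F}_{\Delta,D}^{\alpha})^{2}(f),\ldots\}$; invariance of $M$ is then automatic from the cyclic construction, and properness is obtained by checking that every element of $Y_f$, and hence (by uniform, so pointwise, convergence) every element of $M$, vanishes on $\Delta$. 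You instead take the full annihilator $Y=\bigcap_{p}\ker\delta_{p}$ of the node set, which is closed for free as an intersection of kernels of bounded functionals, visibly non-zero and proper, and invariant precisely because of the interpolation identity. Your subspace contains the paper's ($M\subseteq Y$), and your argument is the more economical of the two: the cyclic-subspace scaffolding is unnecessary once one observes that the whole node-vanishing subspace is already carried into itself. The one substantive point — which you correctly flag — is that invariance needs the interpolation property at \emph{all} nodes of $\Delta$, not merely at the boundary multi-indices in $\prod_{q}\partial\Sigma_{N_q,0}$ that constrain the base function $s=Df$; this is exactly what the existence theorem for $f^{\alpha}_{\Delta,s}$ supplies, so the step is sound.
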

\begin{proof}
Fix a non-zero function $f\in \mathcal{C}(I^k, \mathbb{R})$ such that $f\left(x_1,x_2,\dots,x_k\right)=0$ for all $\left(x_1,x_2,\dots,x_k\right) \in \Delta$. Let $\left(\mathcal{F}_{\Delta, D}^\alpha\right)^r$ denote the $r$-fold autocomposition of $\mathcal{F}_{\Delta, D}^\alpha$ and $\left(\mathcal{F}_{\Delta, D}^\alpha\right)^0(f)=f$. Consider the $\mathcal{F}_{\Delta, D}^\alpha$-cyclic subspace generated by $f$, to wit,
$$
Y_f=\operatorname{span}\left\{f, \mathcal{F}_{\Delta, D}^\alpha(f),\left(\mathcal{F}_{\Delta, D}^\alpha\right)^2(f), \ldots\right\} .
$$
Clearly, $Y_f \neq\{0\}$ and $\mathcal{F}_{\Delta, D}^\alpha\left(Y_f\right) \subseteq Y_f$. Let $g \in Y_f$. Then, by the definition of $Y_f$, there exist constants $\beta_j \in \mathbb{R}$ such that
$$
g=\beta_1\left(\mathcal{F}_{\Delta, D}^\alpha\right)^{r_1}(f)+\beta_2\left(\mathcal{F}_{\Delta, D}^\alpha\right)^{r_2}(f)+\cdots+\beta_m\left(\mathcal{F}_{\Delta, D}^\alpha\right)^{r_m}(f),
$$
where $r_j \in \mathbb{N} \cup\{0\}$. By the interpolatory property of the fractal operator, we have
$$
f\left(x_1,x_2,\dots,x_k\right)=\left(\mathcal{F}_{\Delta, D}^\alpha(f)\right)\left(x_1,x_2,\dots,x_k\right), \forall\left(x_1,x_2,\dots,x_k\right) \in \Delta .
$$
Consequently, $g\left(x_1,x_2,\dots,x_k\right)=0$ for all $\left(x_1,x_2,\dots,x_k\right) \in \Delta$.
Take $M=\overline{Y_f}$. It is obvious that $\mathcal{F}_{\Delta, D}^\alpha(M) \subseteq M$, and hence that $M$ is a closed invariant subspace of $\mathcal{F}_{\Delta, D}^\alpha$

If $h \in M$, then there exists a sequence $\left(h_n\right)_{n \in \mathbb{N}} \subset Y_f$ such that $h_n \rightarrow h$ with respect to the supnorm. Since the uniform convergence (supnorm convergence) implies pointwise convergence, we assert that $h\left(x_1,x_2,\dots,x_k\right)=0$ for all $\left(x_1,x_2,\dots,x_k\right) \in \Delta$. Therefore a continuous function that is nonvanishing at some points in $\Delta$ is not an element in $M$. In particular, $M \neq \mathcal{C}(I^k, \mathbb{R})$, completing the proof.
The following remarks are straightforward, however worth recording.

\end{proof}

\section{Extension of Multivariate fractal operator to $\mathcal{L}^p(I^k, \mathbb{C})$ space}
In  this section, we will discuss the extension of multivariate fractal operator to $\mathcal{L}^p(I^k, \mathbb{C})$ spaces, $1 \leq p<\infty$. After this, we study that for a given $f \in \mathcal{L}^p(I^k, \mathbb{C})$, there exists a self-referential function $\bar{f}_{\Delta, D}^\alpha \in \mathcal{L}^p(I^k, \mathbb{C})$. 

As discuss in Lemma \ref{lem42} below, we define a multivariate fractal operator on the space of continuous complex valued functions on $I^k$, represented as $\mathcal{C}(I^k, \mathbb{C})$, induced with the $\mathcal{L}^p$-norm.

\begin{theorem}\label{41}
Let $\mathcal{C}(I^k, \mathbb{R})$ be induced with the $\mathcal{L}^p$-norm, $1 \leq p<\infty$, and $f \in \mathcal{C}(I^k, \mathbb{R})$. Next, suppose that $D$ is a linear map bounded corresponds to the $\mathcal{L}^p-$ norm on $\mathcal{C}(I^k, \mathbb{R})$. Then the below inequality occurs:
$$
\left\|f-f_{\Delta, D}^\alpha\right\|_{\mathcal{L}^p} \leq \frac{\|\alpha\|_{\infty}}{1-\|\alpha\|_{\infty}}\|f-D f\|_{\mathcal{L}^p}
$$
Consequently, the fractal operator $\mathcal{F}_{\Delta, D}^\alpha$ is bounded. 
\end{theorem}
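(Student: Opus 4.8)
The plan is to mirror the supremum-norm argument of Theorem~\ref{32}, replacing the uniform estimate by an integration over the net $\Delta$; the essential new ingredient is a change of variables on each sub-rectangle together with the fact that the contraction ratios of the maps $v_{q,j_q}$ sum to one in each coordinate. Setting $s=Df$ in the self-referential equation \eqref{alphafrac}, for $(x_1,\dots,x_k)\in\prod_{q=1}^k I_{q,j_q}$ one has the pointwise identity
\[
f_{\Delta,D}^\alpha(x_1,\dots,x_k)-f(x_1,\dots,x_k)
=\alpha(x_1,\dots,x_k)\Bigl[f_{\Delta,D}^\alpha\bigl(v_{1,j_1}^{-1}(x_1),\dots,v_{k,j_k}^{-1}(x_k)\bigr)-(Df)\bigl(v_{1,j_1}^{-1}(x_1),\dots,v_{k,j_k}^{-1}(x_k)\bigr)\Bigr].
\]
Since $f_{\Delta,D}^\alpha$ is continuous on the compact set $I^k$, it lies in $\mathcal{L}^p(I^k,\mathbb{R})$, so all the quantities below are finite.

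The first step is to raise this identity to the $p$-th power and integrate over each sub-rectangle. Because the rectangles $\prod_{q=1}^k I_{q,j_q}$ tile $I^k$ up to a set of measure zero, I would write
\[
\bigl\|f_{\Delta,D}^\alpha-f\bigr\|_{\mathcal{L}^p}^p
=\sum_{(j_1,\dots,j_k)}\int_{\prod_{q}I_{q,j_q}}\bigl|f_{\Delta,D}^\alpha(x)-f(x)\bigr|^p\,dx
\le \|\alpha\|_\infty^p\sum_{(j_1,\dots,j_k)}\int_{\prod_{q}I_{q,j_q}}\bigl|(f_{\Delta,D}^\alpha-Df)\bigl(Q_{j_1\dots j_k}(x)\bigr)\bigr|^p\,dx.
\]
On each term I would apply the substitution $y=Q_{j_1\dots j_k}(x)$, i.e.\ $x_q=v_{q,j_q}(y_q)$. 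By Note~\ref{Naffine} each $v_{q,j_q}$ is affine with slope $a_{q,j_q}$, so $dx=\prod_{q=1}^k|a_{q,j_q}|\,dy$ and $y$ ranges over all of $I^k$; hence each integral equals $\bigl(\prod_q|a_{q,j_q}|\bigr)\,\|f_{\Delta,D}^\alpha-Df\|_{\mathcal{L}^p}^p$.

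The decisive observation is that the resulting weights sum to one: since $\sum_{j_q=1}^{N_q}|a_{q,j_q}|=\sum_{j_q}|I_{q,j_q}|=|I_q|=1$ for every $q$, the product structure of the net gives $\sum_{(j_1,\dots,j_k)}\prod_{q=1}^k|a_{q,j_q}|=\prod_{q=1}^k\sum_{j_q}|a_{q,j_q}|=1$. This collapses the sum and, after taking $p$-th roots, yields $\|f_{\Delta,D}^\alpha-f\|_{\mathcal{L}^p}\le\|\alpha\|_\infty\,\|f_{\Delta,D}^\alpha-Df\|_{\mathcal{L}^p}$, exactly the $\mathcal{L}^p$-analogue of \eqref{9}. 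From here the argument is identical to Theorem~\ref{32}: the triangle inequality gives $\|f_{\Delta,D}^\alpha-f\|_{\mathcal{L}^p}\le\|\alpha\|_\infty(\|f_{\Delta,D}^\alpha-f\|_{\mathcal{L}^p}+\|f-Df\|_{\mathcal{L}^p})$, and solving for $\|f_{\Delta,D}^\alpha-f\|_{\mathcal{L}^p}$ using $\|\alpha\|_\infty<1$ produces the stated bound. Boundedness of $\mathcal{F}_{\Delta,D}^\alpha$ then follows from $\|f_{\Delta,D}^\alpha\|_{\mathcal{L}^p}\le\|f\|_{\mathcal{L}^p}+\frac{\|\alpha\|_\infty}{1-\|\alpha\|_\infty}\|f-Df\|_{\mathcal{L}^p}$ together with $\|f-Df\|_{\mathcal{L}^p}\le(1+\|D\|)\|f\|_{\mathcal{L}^p}$, where $\|D\|$ is the operator norm of $D$ with respect to the $\mathcal{L}^p$-norm.

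I expect the main obstacle to be the bookkeeping of the change of variables, specifically verifying that the Jacobian factors $\prod_q|a_{q,j_q}|$ assemble, via the product structure of the net, into weights summing to one; this is precisely what forces the $\mathcal{L}^p$-constant to coincide with the sup-norm constant of Theorem~\ref{32}. A minor point worth checking is that the self-referential identity, valid pointwise on the interiors of the sub-rectangles, suffices for the integral identity, since the shared boundaries of the rectangles form a Lebesgue-null set and therefore do not affect any of the integrals.
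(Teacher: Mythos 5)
Your proposal is correct and follows essentially the same route as the paper: decompose the $\mathcal{L}^p$-norm over the sub-rectangles, apply the self-referential equation \eqref{alphafrac} with $s=Df$, change variables via the affine maps of Note~\ref{Naffine}, and absorb the Jacobian factors before finishing with the triangle inequality exactly as in Theorem~\ref{32}. Your explicit verification that $\sum_{(j_1,\dots,j_k)}\prod_{q=1}^k\lvert a_{q,j_q}\rvert=1$ is a welcome clarification of a step the paper leaves implicit, but it does not change the argument.
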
 
\begin{proof}
  Note that
$$
\begin{aligned}
\left\|f_{\Delta, D}^\alpha-f\right\|_{\mathcal{L}^p}^p & =\int_{I^k }\left|f_{\Delta, D}^\alpha(x_1,x_2,\dots,x_k)-f(x_1,x_2,\dots,x_k)\right|^p \mathrm{~d} x_1 \mathrm{~d} x_2\dots \mathrm{~d} x_k \\
& =\sum_{(1,\dots,k)} \int_{I_1\times \dots \times I_k}\left|f_{\Delta, D}^\alpha(x_1,x_2,\dots,x_k)-f(x_1,x_2,\dots,x_k)\right|^p \mathrm{~d} x_1 \mathrm{~d} x_2\dots \mathrm{~d} x_k.
\end{aligned}
$$
By using the functional equation for $f_{\Delta, D}^\alpha$ given in equation (\ref{alphafrac}) we get
\begin{equation*}
\begin{aligned}
\left\|f_{\Delta, D}^\alpha-f\right\|_{\mathcal{L}^p}^p  =&\sum_{(1,\dots,k)} \int_{I_1\times \dots \times I_k}\left|\alpha(x_1,x_2,\dots,x_k)\left[f_{\Delta, D}^\alpha\left(v_{j,j_1}^{-1}(x_1),\ldots, v_{k,j_k}^{-1}(x_k)\right)\right.\right.\\&-D f\left.\left.\left(v_{j,j_1}^{-1}(x_1),\ldots, v_{k,j_k}^{-1}(x_k)\right)\right]\right|^p \mathrm{~d} x_1 \mathrm{~d} x_2\dots \mathrm{~d} x_k \\
 \leq &\sum_{(1,\dots,k)} \int_{I_1\times \dots \times I_k}\|\alpha\|_{\infty}^p\left|f_{\Delta, D}^\alpha\left(v_{j,j_1}^{-1}(x_1),\ldots, v_{k,j_k}^{-1}(x_k)\right)\right.\\&-D f\left.\left(v_{j,j_1}^{-1}(x_1),\ldots, v_{k,j_k}^{-1}(x_k)\right)\right|^p \mathrm{~d} x_1 \mathrm{~d} x_2\dots \mathrm{~d} x_k .
\end{aligned}
\end{equation*}
Changing the variable $(x_1,x_2,\dots,x_k)$ to $(\tilde{x_1}, \dots,\tilde{x_k})$ through the transformations $\tilde{x_1}=v_{j,j_1}^{-1}(x_1)$,$\dots$, $\tilde{x_k}=v_{k,j_k}^{-1}(x_k)$ and using the change of variable formula for multiple integrals, we get
\begin{equation*}
\begin{aligned}
\left\|f_{\Delta, D}^\alpha-f\right\|_{\mathcal{L}^p}^p \leq & \sum_{(1,\dots,k)} \int_{I_1\times \dots \times I_k}\|\alpha\|_{\infty}^p\left|f_{\Delta, D}^\alpha(\tilde{x_1},\dots,\tilde{x_k})- D f(\tilde{x_1},\dots,\tilde{x_k})\right|^p\\& \left|\frac{\partial(x_1,\dots,x_k)}{\partial(\tilde{x_1},\dots,\tilde{x_k})}\right| \mathrm{d} \tilde{ x_1} \mathrm{~d} \tilde{x_2} \dots \mathrm{~d} \tilde{x_k}.
\end{aligned}
\end{equation*}
Therefore
\begin{equation}
    \begin{aligned}
     \left\|f_{\Delta, D}^\alpha-f\right\|_{\mathcal{L}^p}^p \leq\|\alpha\|_{\infty}^p\left\|f_{\Delta, D}^\alpha-D f\right\|_{\mathcal{L}^p}^p \sum_{j} \prod_{q=1}^k \left|a_{q,j_{q}} \right|   
    \end{aligned}
\end{equation}
implies
$$
\left\|f_{\Delta, D}^\alpha-f\right\|_{\mathcal{L}^p} \leq\|\alpha\|_{\infty}\left\|f_{\Delta, D}^\alpha-D f\right\|_{\mathcal{L}^p}
$$
By using this and the triangle inequality
$$
\left\|f_{\Delta, D}^\alpha-f\right\|_{\mathcal{L}^p} \leq\|\alpha\|_{\infty}\left[\left\|f_{\Delta, D}^\alpha-f\right\|_{\mathcal{L}^p}+\|f-D f\|_{\mathcal{L}^p}\right],
$$
hence
$$
\left\|f-f_{\Delta, D}^\alpha\right\|_{\mathcal{L}^p} \leq \frac{\|\alpha\|_{\infty}}{1-\|\alpha\|_{\infty}}\|f-D f\|_{\mathcal{L}^p}
$$
From the above bound for the perturbation error we get
$$
\left\|f_{\Delta, D}^\alpha\right\|_{\mathcal{L}^p}-\|f\|_{\mathcal{L}^p} \leq\left\|f-f_{\Delta, D}^\alpha\right\|_{\mathcal{L}^p} \leq \frac{\|\alpha\|_{\infty}}{1-\|\alpha\|_{\infty}}\|f- D f\|_{\mathcal{L}^p},
$$
using which we infer that
$$
\left\|\mathcal{F}_{\Delta, D}^\alpha(f)\right\|_{\mathcal{L}^p} \leq\left[1+\frac{\|\alpha\|_{\infty}}{1-\|\alpha\|_{\infty}}\|I d-D\|\right]\|f\|_{\mathcal{L}^p}
$$
That is, $\mathcal{F}_{\Delta, D}^\alpha$ be a bounded operator.
\end{proof} 

\begin{lemma}\label{lem42}
Suppose $\mathcal{F}^\alpha$ be a fractal operator on $\mathcal{C}(I^k, \mathbb{R})$,induced with the $\mathcal{L}^p$-norm. The operator $\mathcal{F}_{\mathbb{C}}^\alpha: \mathcal{C}(I^k, \mathbb{C}) \rightarrow \mathcal{C}(I^k, \mathbb{C})$ characterized by
$$
\mathcal{F}_{\mathbb{C}}^\alpha(f)=\mathcal{F}_{\mathbb{C}}^\alpha\left(f_1+i f_2\right)=\mathcal{F}^\alpha\left(f_1\right)+i \mathcal{F}^\alpha\left(f_2\right)
$$ is a bounded linear operator.    
\end{lemma}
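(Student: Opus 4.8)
The plan is to verify, in turn, the three defining properties of a bounded linear operator: well-definedness, complex linearity, and boundedness with respect to the $\mathcal{L}^p$-norm. First I would observe that every $f \in \mathcal{C}(I^k, \mathbb{C})$ admits a unique decomposition $f = f_1 + i f_2$ with $f_1 = \operatorname{Re} f$ and $f_2 = \operatorname{Im} f$ both lying in $\mathcal{C}(I^k, \mathbb{R})$. Hence the prescription $\mathcal{F}_{\mathbb{C}}^\alpha(f) = \mathcal{F}^\alpha(f_1) + i\mathcal{F}^\alpha(f_2)$ unambiguously assigns to $f$ an element of $\mathcal{C}(I^k, \mathbb{C})$, so the operator is well-defined.

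For linearity, additivity follows at once from the additivity of $\mathcal{F}^\alpha$ applied separately to real and imaginary parts. The step requiring genuine care is homogeneity under complex scalars: for $\lambda = a + ib \in \mathbb{C}$ one expands $\lambda f = (a f_1 - b f_2) + i(b f_1 + a f_2)$, applies $\mathcal{F}_{\mathbb{C}}^\alpha$, and then invokes the $\mathbb{R}$-linearity of $\mathcal{F}^\alpha$ established in Theorem \ref{33} to rewrite $\mathcal{F}^\alpha(a f_1 - b f_2) + i\mathcal{F}^\alpha(b f_1 + a f_2)$ as $(a+ib)\bigl(\mathcal{F}^\alpha(f_1) + i\mathcal{F}^\alpha(f_2)\bigr) = \lambda\, \mathcal{F}_{\mathbb{C}}^\alpha(f)$. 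The real scalars $a$ and $b$ decouple precisely because $\mathcal{F}^\alpha$ is real-linear, which is exactly what makes the complexification consistent.

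For boundedness, I would estimate the $\mathcal{L}^p$-norm of the image by first passing to a pointwise bound. Using $|\mathcal{F}^\alpha(f_1) + i\mathcal{F}^\alpha(f_2)| \le |\mathcal{F}^\alpha(f_1)| + |\mathcal{F}^\alpha(f_2)|$ together with the triangle inequality for the $\mathcal{L}^p$-norm yields
$$
\left\|\mathcal{F}_{\mathbb{C}}^\alpha(f)\right\|_{\mathcal{L}^p} \le \left\|\mathcal{F}^\alpha(f_1)\right\|_{\mathcal{L}^p} + \left\|\mathcal{F}^\alpha(f_2)\right\|_{\mathcal{L}^p}.
$$
Applying the operator bound from Theorem \ref{41} to each real summand, and then invoking the pointwise domination $|f_1| \le |f|$ and $|f_2| \le |f|$ to conclude $\|f_1\|_{\mathcal{L}^p}, \|f_2\|_{\mathcal{L}^p} \le \|f\|_{\mathcal{L}^p}$, produces an estimate of the form $\|\mathcal{F}_{\mathbb{C}}^\alpha(f)\|_{\mathcal{L}^p} \le 2C\,\|f\|_{\mathcal{L}^p}$, where $C$ is the real $\mathcal{L}^p$-operator norm furnished by Theorem \ref{41}. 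This exhibits $\mathcal{F}_{\mathbb{C}}^\alpha$ as bounded.

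The only mild obstacle is this last estimate, where one must correctly relate the modulus of a complex-valued function to the moduli of its real and imaginary parts underneath the $\mathcal{L}^p$-integral; the domination inequalities $|f_j| \le |f|$ keep the resulting constant explicit and finite. Every other part of the argument reduces to the already-established $\mathbb{R}$-linearity and $\mathcal{L}^p$-boundedness of $\mathcal{F}^\alpha$, so no further machinery is needed.
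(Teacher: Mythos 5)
Your proof is correct, and it diverges from the paper's in the boundedness step in a way worth noting. The paper writes $\lvert\mathcal{F}_{\mathbb{C}}^\alpha(f)\rvert^p=\bigl[\lvert\mathcal{F}^\alpha(f_1)\rvert^2+\lvert\mathcal{F}^\alpha(f_2)\rvert^2\bigr]^{p/2}$ under the integral and applies the elementary inequality $(a+b)^r\le 2^r(a^r+b^r)$ together with $\lVert f_1\rVert_{\mathcal{L}^p}^p+\lVert f_2\rVert_{\mathcal{L}^p}^p\le 2\lVert f\rVert_{\mathcal{L}^p}^p$, arriving at the explicit constant $M=2^{\frac12+\frac1p}$; you instead pass through the pointwise triangle inequality and Minkowski's inequality, arriving at the constant $2\lVert\mathcal{F}^\alpha\rVert$. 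Both routes are valid and both prove the lemma as stated, but the paper's specific constant $M$ is not incidental: it is fixed immediately after this lemma and propagated through the remainder of Section 4 (the extension theorem, the perturbation bounds, and the refinement $2^{\frac12-\frac1p}$ for $p\ge 2$ in Remark \ref{43}), so if your version were substituted in, those later statements would need to be restated with your constant (which is smaller for $p<2$ and larger for $p>2$). On the other side of the ledger, your treatment of complex homogeneity via $\lambda f=(af_1-bf_2)+i(bf_1+af_2)$ and the $\mathbb{R}$-linearity of $\mathcal{F}^\alpha$ is genuinely more careful than the paper's, which simply asserts that $\mathcal{F}_{\mathbb{C}}^\alpha$ is linear; that verification is exactly the point where complexification could fail for a map that is only real-linear, so making it explicit is an improvement.
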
 
\begin{proof}
Since $\mathcal{F}^\alpha$ and $\mathcal{F}_{\mathbb{C}}^\alpha$ is linear. So we have to show that $\mathcal{F}_{\mathbb{C}}^\alpha$ is a bounded operator. To show this, we note that

\begin{align}\label{namesomething}
\left\|\mathcal{F}_{\mathbb{C}}^\alpha(f)\right\|_{\mathcal{L}^p}^p & =\int_{I ^k}\left|\mathcal{F}_{\mathbb{C}}^\alpha(f)\right|^p \mathrm{~d} x_1 \mathrm{~d} x_2\dots \mathrm{~d} x_k \nonumber\\
& =\int_{I^k}\left[\left|\mathcal{F}^\alpha\left(f_1\right)\right|^2+\left|\mathcal{F}^\alpha\left(f_2\right)\right|^2\right]^{\frac{p}{2}} \mathrm{~d} x_1 \mathrm{~d} x_2\dots \mathrm{~d} x_k \nonumber\\
& \leq 2^{\frac{p}{2}} \int_{I ^k}\left[\left|\mathcal{F}^\alpha\left(f_1\right)\right|^p+\left|\mathcal{F}^\alpha\left(f_2\right)\right|^p\right] \mathrm{d} x_1 \mathrm{~d} x_2\dots \mathrm{~d} x_k \nonumber\\
&=2^{\frac{p}{2}}\left[\left\|\mathcal{F}^\alpha\left(f_1\right)\right\|_{\mathcal{L}^p}^p+\left\|\mathcal{F}^\alpha\left(f_2\right)\right\|_{\mathcal{L}^p}^p\right] \\
& \leq 2^{\frac{p}{2}}\left\|\mathcal{F}^\alpha\right\|^p\left[\left\|f_1\right\|_{\mathcal{L}^p}^p+\left\|f_2\right\|_{\mathcal{L}^p}^p\right] \nonumber\\
& =2^{\frac{p}{2}}\left\|\mathcal{F}^\alpha\right\|^p\left[\int_{I^k}\left|f_1\right|^p \mathrm{~d}x_1 \mathrm{~d} x_2\dots \mathrm{~d} x_k +\int_{I^k}\left|f_2\right|^p \mathrm{~d} x_1 \mathrm{~d} x_2\dots \mathrm{~d} x_k\right] \nonumber\\
& \leq 2^{\frac{p}{2}+1}\left\|\mathcal{F}^\alpha\right\|^p \int_{I^k}|f|^p \mathrm{~d}x_1 \mathrm{~d} x_2\dots \mathrm{~d} x_k \nonumber\\
& =2^{\frac{p}{2}+1}\left\|\mathcal{F}^\alpha\right\|^p\|f\|_{\mathcal{L}^p}^p,\nonumber
\end{align}

where the first inequality in the preceding analysis is acquired by taking $( a+b)^r \leq 2^r\left(a^r+b^r\right)$, and the rest are plain. Consequently, 
$$
\left\|\mathcal{F}_{\mathbb{C}}^\alpha(f)\right\|_{\mathcal{L}^p} \leq 2^{\frac{1}{2}+\frac{1}{p}}\left\|\mathcal{F}^\alpha\right\|\|f\|_{\mathcal{L}^p},
$$
proving that $\mathcal{F}_{\mathcal{C}}^\alpha$ be a bounded operator and $\left\|\mathcal{F}_{\mathbb{C}}^\alpha\right\| \leq 2^{\frac{1}{2}+\frac{1}{p}}\left\|\mathcal{F}^\alpha\right\|$.
\end{proof}
\begin{remark} \label{43}
  For $p \geq 2$, using the inequality
$$
(a+b)^r \leq 2^{r-1}\left(a^r+b^r\right), \quad a \geq 0, b \geq 0, r \geq 1.
$$
similar computations as in \eqref{namesomething} provide $\left\|\mathcal{F}_{\mathcal{C}}^\alpha\right\| \leq 2^{\frac{1}{2}-\frac{1}{p}}\left\|\mathcal{F}^\alpha\right\|$. In particular, for $p=2$, we have $\left\|\mathcal{F}_{\mathbb{C}}^\alpha\right\| \leq\left\|\mathcal{F}^\alpha\right\|$. 
\end{remark}
Let us remind the below basic theorem.
\begin{theorem}
  If an operator $T: Z \rightarrow Y$ is linear and bounded, $Y$ is a Banach space and $Z$ is dense in $X$, then $T$ can be extended to $X$ preserving the norm of $T$.  
\end{theorem}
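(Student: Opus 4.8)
The plan is to construct the extension $\tilde{T} : X \to Y$ by the standard density-plus-completeness argument and then verify that it inherits linearity and norm. First I would fix an arbitrary $x \in X$. Since $Z$ is dense in $X$, there exists a sequence $(z_n)_{n \in \mathbb{N}} \subset Z$ with $z_n \to x$ in the norm of $X$. In particular $(z_n)$ is Cauchy, and since $T$ is bounded and linear, the estimate $\|T z_n - T z_m\|_Y = \|T(z_n - z_m)\|_Y \le \|T\|\,\|z_n - z_m\|_X$ shows that $(T z_n)$ is Cauchy in $Y$. Because $Y$ is a Banach space this sequence converges, and I would set $\tilde{T}x := \lim_{n \to \infty} T z_n$.

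Next I would verify that $\tilde{T}$ is well defined, i.e., that the limit is independent of the approximating sequence. If $(z_n')$ is a second sequence in $Z$ converging to $x$, then the interleaved sequence $z_1, z_1', z_2, z_2', \ldots$ still converges to $x$, so its image under $T$ is Cauchy and hence convergent; as the two subsequences $(T z_n)$ and $(T z_n')$ share this single limit, we conclude $\lim_n T z_n = \lim_n T z_n'$. Linearity of $\tilde{T}$ then follows from the linearity of $T$ together with linearity of the limit, by choosing approximating sequences for $x$ and $x'$ and taking their sums and scalar multiples. Finally, for $x \in Z$ the constant sequence $z_n = x$ gives $\tilde{T}x = T x$, so $\tilde{T}$ genuinely extends $T$.

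For the norm preservation I would pass to the limit in the boundedness inequality. For $x \in X$ with $z_n \to x$, continuity of the norm yields $\|\tilde{T}x\|_Y = \lim_n \|T z_n\|_Y \le \|T\|\,\lim_n \|z_n\|_X = \|T\|\,\|x\|_X$, whence $\|\tilde{T}\| \le \|T\|$. The reverse inequality is immediate, since $\tilde{T}$ restricts to $T$ on $Z$ and the supremum defining $\|\tilde{T}\|$ is therefore taken over a set containing the unit ball of $Z$, giving $\|\tilde{T}\| \ge \|T\|$. Combining the two gives $\|\tilde{T}\| = \|T\|$.

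I do not anticipate a genuine obstacle, as every step is classical; the one hypothesis that enters essentially is the completeness of $Y$, which is precisely what guarantees that the candidate value $\tilde{T}x$ exists. Were completeness dropped, the construction would fail at exactly the step where $(T z_n)$ is declared convergent, so the Banach assumption on $Y$ is the crux of the argument rather than a technical convenience.
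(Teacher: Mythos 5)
Your proof is correct: it is the classical density-plus-completeness extension argument (the standard ``BLT'' theorem), with well-definedness, linearity, the extension property, and the equality of norms all handled properly. The paper itself states this result only as a recalled basic theorem and supplies no proof, so there is nothing to contrast with; your argument is exactly the standard one the authors are implicitly invoking.
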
 
In the sequel, we fix the notation $M:=2^{\frac{1}{2}+\frac{1}{p}}$.

\begin{theorem}
Let $1 \leq p<\infty, \mathcal{C}(I^k, \mathbb{R})$ be equipped with 
 $\mathcal{L}_p$-norm and $\mathcal{F}_{\mathcal{C}}^\alpha$ : $\mathcal{C}(I^k, \mathbb{C}) \rightarrow \mathcal{C}(I^k, \mathbb{C})$ be the fractal operator defined in the previous lemma. Then there exists a bounded linear operator $\overline{\mathcal{F}}_{\mathbb{C}}^\alpha: \mathcal{L}^p(I^k, \mathbb{C}) \rightarrow \mathcal{L}^p(I^k, \mathbb{C})$ such that its restriction to $\mathcal{C}(I^k, \mathbb{C})$ is $\mathcal{F}_{\mathbb{C}}^\alpha$ and $\left\|\mathcal{F}^\alpha\right\| \leq\left\|\overline{\mathcal{F}}_{\mathbb{C}}^\alpha\right\|=\left\|\mathcal{F}_{\mathbb{C}}^\alpha\right\| \leq$ $M\left\|\mathcal{F}^\alpha\right\|$
\end{theorem}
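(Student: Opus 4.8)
The plan is to invoke the abstract density-extension theorem stated immediately above, applied with $Z=\mathcal{C}(I^k,\mathbb{C})$, with $X=Y=\mathcal{L}^p(I^k,\mathbb{C})$, and with $T=\mathcal{F}_{\mathbb{C}}^\alpha$. Three ingredients must be in place before that theorem can be quoted. First, $\mathcal{L}^p(I^k,\mathbb{C})$ is a Banach space for $1\le p<\infty$, which is standard. Second, $\mathcal{C}(I^k,\mathbb{C})$ is dense in $\mathcal{L}^p(I^k,\mathbb{C})$ with respect to the $\mathcal{L}^p$-norm; this is the classical density of continuous functions in $\mathcal{L}^p$ over the compact set $I^k$. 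Third, $\mathcal{F}_{\mathbb{C}}^\alpha$ is linear and bounded in the $\mathcal{L}^p$-norm, which is exactly the content of Lemma \ref{lem42}. Granting these, the extension theorem furnishes a bounded linear operator $\overline{\mathcal{F}}_{\mathbb{C}}^\alpha$ on $\mathcal{L}^p(I^k,\mathbb{C})$ whose restriction to $\mathcal{C}(I^k,\mathbb{C})$ is $\mathcal{F}_{\mathbb{C}}^\alpha$ and which preserves the operator norm, yielding the central equality $\|\overline{\mathcal{F}}_{\mathbb{C}}^\alpha\|=\|\mathcal{F}_{\mathbb{C}}^\alpha\|$.

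It then remains to sandwich this common value. For the upper estimate, the inequality $\|\mathcal{F}_{\mathbb{C}}^\alpha\|\le M\|\mathcal{F}^\alpha\|$, with $M=2^{1/2+1/p}$, is precisely what Lemma \ref{lem42} already delivers, so nothing new is needed there. For the lower estimate $\|\mathcal{F}^\alpha\|\le\|\mathcal{F}_{\mathbb{C}}^\alpha\|$, I would restrict attention to real inputs: regarding $f\in\mathcal{C}(I^k,\mathbb{R})$ as an element of $\mathcal{C}(I^k,\mathbb{C})$ with vanishing imaginary part, the defining formula $\mathcal{F}_{\mathbb{C}}^\alpha(f_1+if_2)=\mathcal{F}^\alpha(f_1)+i\mathcal{F}^\alpha(f_2)$ gives $\mathcal{F}_{\mathbb{C}}^\alpha(f)=\mathcal{F}^\alpha(f)$, and hence $\|\mathcal{F}_{\mathbb{C}}^\alpha(f)\|_{\mathcal{L}^p}=\|\mathcal{F}^\alpha(f)\|_{\mathcal{L}^p}$. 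Taking the supremum over real $f$ in the unit ball and comparing with the supremum over all complex $f$ in the unit ball forces $\|\mathcal{F}^\alpha\|\le\|\mathcal{F}_{\mathbb{C}}^\alpha\|$. Concatenating the three relations produces the asserted chain $\|\mathcal{F}^\alpha\|\le\|\overline{\mathcal{F}}_{\mathbb{C}}^\alpha\|=\|\mathcal{F}_{\mathbb{C}}^\alpha\|\le M\|\mathcal{F}^\alpha\|$.

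The hard part will be essentially bookkeeping rather than genuine analysis, since the substantive work—boundedness of $\mathcal{F}_{\mathbb{C}}^\alpha$ in the $\mathcal{L}^p$-norm and the constant $M$—was already done in Lemma \ref{lem42}. The one point demanding care is the norm-preservation clause: one must be explicit that the operator norm of $\mathcal{F}_{\mathbb{C}}^\alpha$ computed on the dense subspace $\mathcal{C}(I^k,\mathbb{C})$ coincides with the operator norm of its extension on all of $\mathcal{L}^p(I^k,\mathbb{C})$, which the cited extension theorem guarantees through continuity of the norm together with density. Consequently the only mild obstacle is keeping the two distinct operator norms—the real norm of $\mathcal{F}^\alpha$ and the complex norm of $\mathcal{F}_{\mathbb{C}}^\alpha$—properly separated throughout the argument.
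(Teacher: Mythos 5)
Your proposal is correct and follows essentially the same route as the paper: invoke the density of $\mathcal{C}(I^k,\mathbb{C})$ in $\mathcal{L}^p(I^k,\mathbb{C})$ together with the norm-preserving extension theorem to get $\|\overline{\mathcal{F}}_{\mathbb{C}}^\alpha\|=\|\mathcal{F}_{\mathbb{C}}^\alpha\|\le M\|\mathcal{F}^\alpha\|$ from Lemma \ref{lem42}, and then obtain the lower bound $\|\mathcal{F}^\alpha\|\le\|\overline{\mathcal{F}}_{\mathbb{C}}^\alpha\|$ by noting that the operator agrees with $\mathcal{F}^\alpha$ on real-valued functions and taking suprema over the smaller (real) unit ball. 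No gaps; the argument matches the paper's proof step for step.
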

\begin{proof}
It is well-known that $\mathcal{C}(I^k, \mathbb{C})$ is dense in $\mathcal{L}^p(I^k, \mathbb{C})$, for $1 \leq p<\infty$. From the previous lemma, we have a bounded linear operator $\mathcal{F}_{\mathbb{C}}^\alpha: \mathcal{C}(I^k, \mathbb{C}) \rightarrow \mathcal{C}(I^k, \mathbb{C})$. Now using the previous theorem we conclude that there exists a bounded linear operator $\overline{\mathcal{F}}_{\mathbb{C}}^\alpha: \mathcal{L}^p(I^k, \mathbb{C}) \rightarrow \mathcal{L}^p(I^k, \mathbb{C})$ such that
$$
\left\|\overline{\mathcal{F}}_{\mathbb{C}}^\alpha\right\|=\left\|\mathcal{F}_{\mathbb{C}}^\alpha\right\| \leq M\left\|\mathcal{F}^\alpha\right\| .
$$
Furthermore, the previous theorem gives $\overline{\mathcal{F}}_{\mathrm{C}}^\alpha(f)=\mathcal{F}_{\mathbb{C}}^\alpha(f)=\mathcal{F}^\alpha(f), \forall f \in$ $\mathcal{C}(I^k, \mathbb{R})$. Hence
$$
\begin{aligned}
\left\|\mathcal{F}^\alpha\right\| & :=\sup \left\{\left\|\mathcal{F}^\alpha(f)\right\|_{\mathcal{L}^p}: f \in \mathcal{C}(I^k, \mathbb{R}),\|f\|_{\mathcal{L}^p}=1\right\} \\
& =\sup \left\{\left\|\overline{\mathcal{F}}_{\mathbb{C}}^\alpha(f)\right\|_{\mathcal{L}^p}: f \in C(I^k, \mathbb{R}),\|f\|_{\mathcal{L}^p}=1\right\}
\end{aligned}
$$
This implies that $\left\|\mathcal{F}^\alpha\right\| \leq\left\|\overline{\mathcal{F}}_{\mathbb{C}}^\alpha\right\|$ and hence that $\left\|\mathcal{F}^\alpha\right\| \leq\left\|\overline{\mathcal{F}}_{\mathbb{C}}^\alpha\right\| \leq M\left\|\mathcal{F}^\alpha\right\|$. 
\end{proof} 
\begin{remark}
   In view of Remark (\ref{43}), for $p=2$, we have $\left\|\overline{\mathcal{F}}_{\mathrm{C}}^\alpha\right\|=\left\|\mathcal{F}^\alpha\right\|$; for the univariate counterpart see \cite{M1}. 
\end{remark} 

\begin{remark}
  Similar to Remark (\ref{43}), we note that the constant $M$ appearing in the previous theorem is not claimed to be optimal.  
\end{remark} 

\begin{lemma}
 Let $L: \mathcal{C}(I^k, \mathbb{R}) \rightarrow \mathcal{C}(I^k, \mathbb{R})$ be a bounded linear operator with respect to the $\mathcal{L}^p$-norm on $\mathcal{C}(I^k, \mathbb{R}), 1 \leq p<\infty$. Then there exists a bounded linear operator $\bar{D}_{\mathbb{C}}: \mathcal{L}^p(I^k, \mathbb{C}) \rightarrow \mathcal{L}^p(I^k, \mathbb{C})$ such that the restriction of $\bar{D}_{\mathbb{C}}$ to $\mathcal{C}(I^k, \mathbb{R})$ is $D$ and $\|D\| \leq\left\|\bar{D}_{\mathbb{C}}\right\| \leq M\|D\|$. Moreover, the extension of $I d-D$ is $I d-\bar{D}_C$.   
\end{lemma}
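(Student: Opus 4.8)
The plan is to mirror, verbatim, the complexification-and-extension scheme already carried out for the fractal operator in Lemma \ref{lem42} and the extension theorem that follows it, now with $D$ playing the role of $\mathcal{F}^\alpha$. First I would complexify $D$: define $D_{\mathbb{C}}: \mathcal{C}(I^k,\mathbb{C}) \to \mathcal{C}(I^k,\mathbb{C})$ by
$$
D_{\mathbb{C}}(f) = D_{\mathbb{C}}(f_1 + i f_2) = D(f_1) + i\, D(f_2),
$$
where $f = f_1 + i f_2$ with $f_1, f_2 \in \mathcal{C}(I^k,\mathbb{R})$. The $\mathbb{C}$-linearity of $D_{\mathbb{C}}$ is immediate from the $\mathbb{R}$-linearity of $D$; the only substantive point is boundedness with respect to the $\mathcal{L}^p$-norm.

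For that estimate I would reproduce the chain of inequalities in \eqref{namesomething} with each occurrence of $\mathcal{F}^\alpha$ replaced by $D$. Using $\lvert D_{\mathbb{C}}(f)\rvert^p = \big(\lvert D f_1\rvert^2 + \lvert D f_2\rvert^2\big)^{p/2}$ together with the elementary inequality $(a+b)^r \leq 2^r(a^r+b^r)$ (applied with $r = p/2$), exactly as before, one obtains
$$
\left\|D_{\mathbb{C}}(f)\right\|_{\mathcal{L}^p}^p \leq 2^{\frac{p}{2}+1}\,\|D\|^p\,\|f\|_{\mathcal{L}^p}^p ,
$$
whence $\left\|D_{\mathbb{C}}\right\| \leq 2^{\frac{1}{2}+\frac{1}{p}}\|D\| = M\|D\|$. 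The reverse estimate $\|D\| \leq \left\|D_{\mathbb{C}}\right\|$ holds because $D_{\mathbb{C}}$ restricted to the real subspace $\mathcal{C}(I^k,\mathbb{R})$ coincides with $D$, so the supremum defining $\left\|D_{\mathbb{C}}\right\|$ is taken over a larger family of test functions than the one defining $\|D\|$.

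Next I would invoke the density of $\mathcal{C}(I^k,\mathbb{C})$ in $\mathcal{L}^p(I^k,\mathbb{C})$ for $1 \leq p < \infty$ and the fact that $\mathcal{L}^p(I^k,\mathbb{C})$ is a Banach space; by the extension theorem stated just above, the bounded linear operator $D_{\mathbb{C}}$ extends uniquely to a bounded linear operator $\bar{D}_{\mathbb{C}}: \mathcal{L}^p(I^k,\mathbb{C}) \to \mathcal{L}^p(I^k,\mathbb{C})$ with $\left\|\bar{D}_{\mathbb{C}}\right\| = \left\|D_{\mathbb{C}}\right\|$ and with restriction $D$ on $\mathcal{C}(I^k,\mathbb{R})$. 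Chaining the two bounds yields $\|D\| \leq \left\|\bar{D}_{\mathbb{C}}\right\| \leq M\|D\|$, which is the asserted inequality. Finally, for the \emph{moreover} clause I would use uniqueness of the extension: the identity on $\mathcal{C}(I^k,\mathbb{C})$ extends to the identity on $\mathcal{L}^p(I^k,\mathbb{C})$, and since extension is linear, the unique bounded extension of $Id - D$ (first complexified) is $Id - \bar{D}_{\mathbb{C}}$; equivalently, $Id - \bar{D}_{\mathbb{C}}$ is bounded and agrees with $Id - D$ on the dense subspace, so uniqueness forces it to be the extension. I expect no genuine obstacle here — the argument is structurally identical to the preceding one — the single point demanding care is verifying that the complexification of $D$ obeys the same $2^{p/2+1}$-type bound, i.e. that the constant $M$ (and nothing worse) suffices, which is precisely where the inequality on $(a+b)^r$ enters.
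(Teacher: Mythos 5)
Your proposal is correct and follows essentially the same route as the paper: complexify $D$ componentwise, run the $(a+b)^{p/2}\le 2^{p/2}(a^{p/2}+b^{p/2})$ estimate from Lemma \ref{lem42} to get $\|D_{\mathbb{C}}\|\le M\|D\|$, extend by density, and identify the extension of $Id-D$ with $Id-\bar{D}_{\mathbb{C}}$ via uniqueness of continuous extensions (the paper phrases this last step as an explicit convergent-sequence computation, which is the same argument). No gaps.
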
 

\begin{proof}
 The first part of the lemma can be proved similar to Lemma (\ref{lem42}) via $D_{\mathbb{C}}$. For the rest, let $f \in \mathcal{L}^p(I^k, \mathbb{C})$ and a sequence of complex-valued continuous functions such that $f_n \rightarrow f$ with respect to the $\mathcal{L}^p$-norm. Then
$$
(I d-D)_{\mathbb{C}} f_n=f_n-D_{\mathbb{C}} f_n \rightarrow f-\bar{D}_{\mathbb{C}} f
$$
and hence $\overline{(I d-D)_{\mathbb{C}}}=I d-\bar{D}_{\mathbb{C}}$.  
\end{proof}
\begin{lemma}
For each $f \in \mathcal{C}(I^k, \mathbb{C})$,
$$
\left\|\mathcal{F}_{\mathbb{C}}^\alpha(f)-f\right\|_{\mathcal{L}^p} \leq M\|\alpha\|_{\infty}\left\|\mathcal{F}_{\mathbb{C}}^\alpha(f)-D_{\mathbb{C}}(f)\right\|_{\mathcal{L}^p} .
$$ 
\end{lemma}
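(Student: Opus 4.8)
The plan is to reduce the complex estimate to the real-valued perturbation bound of Theorem~\ref{41}, applied to the real and imaginary parts separately, and then to reassemble the pieces using the same elementary inequalities that appear in Lemma~\ref{lem42}. First I would write $f = f_1 + i f_2$ with $f_1, f_2 \in \mathcal{C}(I^k, \mathbb{R})$, so that by definition $\mathcal{F}_{\mathbb{C}}^\alpha(f) = \mathcal{F}^\alpha(f_1) + i\,\mathcal{F}^\alpha(f_2)$ and $D_{\mathbb{C}}(f) = D f_1 + i\, D f_2$. Setting $u_q := \mathcal{F}^\alpha(f_q) - f_q$ and $w_q := \mathcal{F}^\alpha(f_q) - D f_q$ for $q = 1, 2$, Theorem~\ref{41} supplies the two scalar estimates $\|u_q\|_{\mathcal{L}^p} \le \|\alpha\|_{\infty}\,\|w_q\|_{\mathcal{L}^p}$.

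Next I would expand the left-hand side. Since $|\mathcal{F}_{\mathbb{C}}^\alpha(f) - f|^2 = |u_1|^2 + |u_2|^2$ pointwise, we have
\[
\left\|\mathcal{F}_{\mathbb{C}}^\alpha(f) - f\right\|_{\mathcal{L}^p}^p = \int_{I^k} \left(|u_1|^2 + |u_2|^2\right)^{p/2}\, \mathrm{d}x_1 \cdots \mathrm{d}x_k.
\]
Applying the inequality $(a+b)^r \le 2^r(a^r + b^r)$ with $r = p/2$, $a = |u_1|^2$, $b = |u_2|^2$, exactly as in \eqref{namesomething}, gives $\|\mathcal{F}_{\mathbb{C}}^\alpha(f) - f\|_{\mathcal{L}^p}^p \le 2^{p/2}\bigl(\|u_1\|_{\mathcal{L}^p}^p + \|u_2\|_{\mathcal{L}^p}^p\bigr)$; inserting the two scalar estimates from Theorem~\ref{41} then bounds this by $2^{p/2}\|\alpha\|_{\infty}^p\bigl(\|w_1\|_{\mathcal{L}^p}^p + \|w_2\|_{\mathcal{L}^p}^p\bigr)$.

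For the right-hand side I would exploit the opposite comparison. Pointwise, $|w_q| \le \bigl(|w_1|^2 + |w_2|^2\bigr)^{1/2} = |\mathcal{F}_{\mathbb{C}}^\alpha(f) - D_{\mathbb{C}}(f)|$ for each $q$, so $\|w_1\|_{\mathcal{L}^p}^p + \|w_2\|_{\mathcal{L}^p}^p \le 2\,\|\mathcal{F}_{\mathbb{C}}^\alpha(f) - D_{\mathbb{C}}(f)\|_{\mathcal{L}^p}^p$. Chaining the two bounds yields
\[
\left\|\mathcal{F}_{\mathbb{C}}^\alpha(f) - f\right\|_{\mathcal{L}^p}^p \le 2^{\frac{p}{2}+1}\,\|\alpha\|_{\infty}^p\,\left\|\mathcal{F}_{\mathbb{C}}^\alpha(f) - D_{\mathbb{C}}(f)\right\|_{\mathcal{L}^p}^p,
\]
and extracting $p$-th roots produces the constant $2^{\frac{1}{2}+\frac{1}{p}} = M$, which is precisely the claimed bound.

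The only point demanding care is that the two sides require the relation between $|z|$ and $|\operatorname{Re} z|, |\operatorname{Im} z|$ to be used in opposite directions: the left-hand side is enlarged by splitting $(|u_1|^2 + |u_2|^2)^{p/2}$ through $(a+b)^r \le 2^r(a^r+b^r)$, while the right-hand side is collapsed through $|w_q| \le |w_1 + i w_2|$. Keeping these directions straight is exactly what makes the final constant equal $M$ rather than a larger power of $2$; beyond this bookkeeping there is no analytic obstacle, since Theorem~\ref{41} already carries the essential per-component estimate.
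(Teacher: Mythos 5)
Your proposal is correct and follows essentially the same route as the paper: decompose $f=f_1+if_2$, invoke the componentwise bound $\|\mathcal{F}^\alpha(f_q)-f_q\|_{\mathcal{L}^p}\leq\|\alpha\|_{\infty}\|\mathcal{F}^\alpha(f_q)-Df_q\|_{\mathcal{L}^p}$ from the proof of Theorem~\ref{41}, apply $(a+b)^{p/2}\leq 2^{p/2}(a^{p/2}+b^{p/2})$ on one side and the pointwise bound $|w_q|\leq|\mathcal{F}_{\mathbb{C}}^\alpha(f)-D_{\mathbb{C}}(f)|$ on the other, yielding the constant $2^{\frac{1}{2}+\frac{1}{p}}=M$. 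Your explicit justification of the final recombination step is in fact slightly more careful than the paper's, which passes over it silently.
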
 
\begin{proof}
Let $f \in \mathcal{C}(I^k, \mathbb{C})$, where $f=f_1+i f_2$ for some $f_1, f_2 \in \mathcal{C}(I^k, \mathbb{R})$. Since $\mathcal{F}_{\mathbb{C}}^\alpha(f):=\mathcal{F}^\alpha\left(f_1\right)+i \mathcal{F}^\alpha\left(f_2\right)$, we have
$$
\operatorname{Re}\left(\mathcal{F}_{\mathbb{C}}^\alpha(f)\right)=\mathcal{F}^\alpha\left(f_1\right), \quad \operatorname{Im}\left(\mathcal{F}_{\mathbb{C}}^\alpha(f)\right)=\mathcal{F}^\alpha\left(f_2\right)
$$
Furthermore, Using Theorem (\ref{41}) and some basic inequalities, we have
$$
\begin{aligned}
\left\|\mathcal{F}_{\mathbb{C}}^\alpha(f)-f\right\|_{\mathcal{L}^p}^p= & \int_{I^k}\left|\mathcal{F}_{\mathbb{C}}^\alpha(f)-f\right|^p \mathrm{~d} x_1 \mathrm{~d} x_2\dots \mathrm{~d} x_k \\
= & \int_{I^k}\left[\left|\mathcal{F}^\alpha\left(f_1\right)-f_1\right|^2+\left|\mathcal{F}^\alpha\left(f_2\right)-f_2\right|^2\right]^{\frac{p}{2}} \mathrm{~d}x_1 \mathrm{~d} x_2\dots \mathrm{~d} x_k \\
\leq & 2^{\frac{p}{2}} \int_{I^k}\left[\left|\mathcal{F}^\alpha\left(f_1\right)-f_1\right|^p+\left|\mathcal{F}^\alpha\left(f_2\right)-f_2\right|^p\right] \mathrm{d} x_1 \mathrm{~d} x_2\dots \mathrm{~d} x_k \\
= & 2^{\frac{p}{2}}\left[\left\|\mathcal{F}^\alpha\left(f_1\right)-f_1\right\|_{\mathcal{L}^p}^p+\left\|\mathcal{F}^\alpha\left(f_2\right)-f_2\right\|_{\mathcal{L}^p}^p\right] \\
\leq & 2^{\frac{p}{2}}\|\alpha\|_{\infty}^p\left[\left\|\mathcal{F}^\alpha\left(f_1\right)-D\left(f_1\right)\right\|_{\mathcal{L}^p}^p+\left\|\mathcal{F}^\alpha\left(f_2\right)-D\left(f_2\right)\right\|_{\mathcal{L}^p}^p\right] \\
= & 2^{\frac{p}{2}}\|\alpha\|_{\infty}^p\left[\int_{I^p}\left|\mathcal{F}^\alpha\left(f_1\right)-D\left(f_1\right)\right|^p \mathrm{~d} x_1 \mathrm{~d} x_2\dots \mathrm{~d} x_k \right. \\
& \left.+\int_{I^k}\left|\mathcal{F}^\alpha\left(f_2\right)-D\left(f_2\right)\right|^p \mathrm{~d} x_1 \mathrm{~d} x_2\dots \mathrm{~d} x_k\right] \\
\leq & 2^{\frac{p}{2}+1}\|\alpha\|_{\infty}^p \int_{I^k}\left|\mathcal{F}_{\mathbb{C}}^\alpha(f)-D_{\mathbb{C}}(f)\right|^p \mathrm{~d} x_1 \mathrm{~d} x_2\dots \mathrm{~d} x_k\\
= & 2^{\frac{p}{2}+1}\|\alpha\|_{\infty}^p\left\|\mathcal{F}_{\mathcal{C}}^\alpha(f)-D_{\mathbb{C}}(f)\right\|_{\mathcal{L}^p}^p,
\end{aligned}
$$
hence the proof.
\end{proof} 
The proof of the next corollary follows from the way in which $\overline{\mathcal{F}}_{\mathbb{C}}^\alpha$ is defined using the denseness of $\mathcal{C}(I^k, \mathbb{C})$ in $\mathcal{L}^p(I^k, \mathbb{C})$.
\begin{corollary}
    
For each $f \in \mathcal{L}^p(I^k, \mathbb{C})$,
$$
\left\|\overline{\mathcal{F}}_{\mathbb{C}}^\alpha(f)-f\right\|_{\mathcal{L}^p} \leq M\|\alpha\|_{\infty}\left\|\overline{\mathcal{F}}_{\mathbb{C}}^\alpha(f)-\bar{D}_{\mathbb{C}} f\right\|_{\mathcal{L}^p}
$$
\end{corollary}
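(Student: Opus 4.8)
The plan is to promote the preceding lemma---which establishes the desired inequality for continuous complex-valued functions---to all of $\mathcal{L}^p(I^k, \mathbb{C})$ by a standard density-and-continuity argument. Fix $f \in \mathcal{L}^p(I^k, \mathbb{C})$. Since $\mathcal{C}(I^k, \mathbb{C})$ is dense in $\mathcal{L}^p(I^k, \mathbb{C})$ for $1 \le p < \infty$, I would choose a sequence $(f_n)_{n \in \mathbb{N}} \subset \mathcal{C}(I^k, \mathbb{C})$ with $\|f_n - f\|_{\mathcal{L}^p} \to 0$. Applying the previous lemma to each $f_n$ gives
\[
\left\|\mathcal{F}_{\mathbb{C}}^\alpha(f_n) - f_n\right\|_{\mathcal{L}^p} \le M\|\alpha\|_{\infty} \left\|\mathcal{F}_{\mathbb{C}}^\alpha(f_n) - D_{\mathbb{C}}(f_n)\right\|_{\mathcal{L}^p}.
\]
Because $\overline{\mathcal{F}}_{\mathbb{C}}^\alpha$ and $\bar{D}_{\mathbb{C}}$ are by construction the bounded linear extensions agreeing with $\mathcal{F}_{\mathbb{C}}^\alpha$ and $D_{\mathbb{C}}$ on the dense subspace $\mathcal{C}(I^k, \mathbb{C})$, this inequality for $f_n$ may equivalently be written with $\overline{\mathcal{F}}_{\mathbb{C}}^\alpha(f_n)$ and $\bar{D}_{\mathbb{C}}(f_n)$ in place of $\mathcal{F}_{\mathbb{C}}^\alpha(f_n)$ and $D_{\mathbb{C}}(f_n)$.

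Next I would pass to the limit $n \to \infty$. Since $\overline{\mathcal{F}}_{\mathbb{C}}^\alpha$ is bounded, $f_n \to f$ in $\mathcal{L}^p$ forces $\overline{\mathcal{F}}_{\mathbb{C}}^\alpha(f_n) \to \overline{\mathcal{F}}_{\mathbb{C}}^\alpha(f)$, and likewise $\bar{D}_{\mathbb{C}}(f_n) \to \bar{D}_{\mathbb{C}}(f)$. Hence $\overline{\mathcal{F}}_{\mathbb{C}}^\alpha(f_n) - f_n \to \overline{\mathcal{F}}_{\mathbb{C}}^\alpha(f) - f$ and $\overline{\mathcal{F}}_{\mathbb{C}}^\alpha(f_n) - \bar{D}_{\mathbb{C}}(f_n) \to \overline{\mathcal{F}}_{\mathbb{C}}^\alpha(f) - \bar{D}_{\mathbb{C}}(f)$, both in the $\mathcal{L}^p$-norm. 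The norm map $g \mapsto \|g\|_{\mathcal{L}^p}$ is continuous, so letting $n \to \infty$ on both sides of the displayed inequality yields
\[
\left\|\overline{\mathcal{F}}_{\mathbb{C}}^\alpha(f) - f\right\|_{\mathcal{L}^p} \le M\|\alpha\|_{\infty} \left\|\overline{\mathcal{F}}_{\mathbb{C}}^\alpha(f) - \bar{D}_{\mathbb{C}}(f)\right\|_{\mathcal{L}^p},
\]
which is precisely the claimed estimate.

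There is no genuine obstacle here; the only point requiring care is the bookkeeping of which operator acts where, and the observation that the constant $M\|\alpha\|_{\infty}$ is independent of $n$ so that the bound survives the limit---immediate since $M = 2^{1/2+1/p}$ and $\|\alpha\|_{\infty}$ are fixed. The argument rests entirely on three facts already available in the excerpt: the inequality on the dense subspace (the preceding lemma), the boundedness---hence $\mathcal{L}^p$-continuity---of both extensions $\overline{\mathcal{F}}_{\mathbb{C}}^\alpha$ and $\bar{D}_{\mathbb{C}}$, and the continuity of the norm. This is exactly what the remark preceding the corollary anticipates when it says the proof ``follows from the way in which $\overline{\mathcal{F}}_{\mathbb{C}}^\alpha$ is defined using the denseness of $\mathcal{C}(I^k, \mathbb{C})$ in $\mathcal{L}^p(I^k, \mathbb{C})$.''
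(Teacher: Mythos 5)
Your proposal is correct and is exactly the density-and-continuity argument the paper intends: the paper omits the proof, remarking only that it "follows from the way in which $\overline{\mathcal{F}}_{\mathbb{C}}^\alpha$ is defined using the denseness of $\mathcal{C}(I^k,\mathbb{C})$ in $\mathcal{L}^p(I^k,\mathbb{C})$," and your write-up fills in precisely that sketch. Applying the preceding lemma on the dense subspace, using the boundedness of both extensions to pass to the limit, and invoking continuity of the norm is the complete and intended argument.
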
 
Using the previous lemma, we deduce the next theorem in a similar way as that in  (\ref{32},\ref{33},\ref{34},\ref{36},\ref{37},\ref{39}). We avoid the proof, however, recall that for a bounded linear operator $T: X \rightarrow X$ in a Hilbert space $X$, the following orthogonal decomposition holds:
$$
X=\overline{\text { Range }(T)} \oplus \operatorname{Ker}\left(T^*\right) .
$$
\begin{theorem}
Let $\|\alpha\|_{\infty}=\sup \{|\alpha(x, y)|:(x, y) \in I^k\}, M\|\alpha\|_{\infty}<1$ and let $I d$ be the identity operator on $\mathcal{L}^p(I^k, \mathbb{C})$.\\
\begin{itemize}
    \item For any $f \in \mathcal{L}^p(I^k, \mathbb{C})$, the perturbation error satisfies
$$
\left\|\overline{\mathcal{F}}_{\mathrm{C}}^\alpha(f)-f\right\|_{\mathcal{L}^p} \leq \frac{M^2\|\alpha\|_{\infty}\|I d-D\|_{\mathcal{L}^p}}{1-M\|\alpha\|_{\infty}}\|f\|_{\mathcal{L}^p} .
$$
In particular, If $\|\alpha\|_{\infty}=0$ then $\overline{\mathcal{F}}_{\mathrm{C}}^\alpha=Id$.\\
\item The norm of the fractal operator $\overline{\mathcal{F}}_{\mathrm{C}}^\alpha: \mathcal{L}^p(I^k, \mathbb{C}) \rightarrow \mathcal{L}^p(I^k, \mathbb{C})$ satisfies
$$
\left\|\overline{\mathcal{F}}_{\mathbb{C}}^\alpha\right\|_{\mathcal{L}^p} \leq 1+\frac{M^2\|\alpha\|_{\infty}\|I d-D\|}{1-M\|\alpha\|_{\infty}} .
$$
\item For $\|\alpha\|_{\infty}<M^{-2}\|D\|^{-1}, \overline{\mathcal{F}}_{\mathrm{C}}^\alpha$ is bounded below. In particular, $\overline{\mathcal{F}}_{\mathrm{C}}^\alpha$ is one to one and the range of $\overline{\mathcal{F}}_{\mathrm{C}}^\alpha$ is closed.
\item $\|\alpha\|_{\infty}<\left(M+M^2\|I d-D\|\right)^{-1}$, then $\overline{\mathcal{F}}_{\mathrm{C}}^\alpha$ has a bounded inverse. Moreover,
$$
\left\|\left(\overline{\mathcal{F}}_{\mathrm{C}}^\alpha\right)^{-1}\right\| \leq \frac{1+M\|\alpha\|_{\infty}}{1-M^2\|\alpha\|_{\infty}\|D\|} .
$$
\item If $\|\alpha\|_{\infty}<\left(M+M^2\|I d-D\|\right)^{-1}, \overline{\mathcal{F}}_{\mathrm{C}}^\alpha$ is Fredholm and its index is 0 .
\item Let $p=2$ and $\|\alpha\|_{\infty}<\|D\|^{-1}$, then $\mathcal{L}^2(I^k, \mathbb{C})=\operatorname{Range}\left(\overline{\mathcal{F}}_{\mathrm{C}}^\alpha\right) \oplus$ $\operatorname{Ker}\left(\left(\overline{\mathcal{F}}_{\mathrm{C}}^\alpha\right)^*\right)$ where $\left(\overline{\mathcal{F}}_{\mathrm{C}}^\alpha\right)^*$ is the adjoint operator of $\overline{\mathcal{F}}_{\mathrm{C}}^\alpha$.
\end{itemize}
\end{theorem}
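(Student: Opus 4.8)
The plan is to establish the six items in the listed order, each as the $\mathcal{L}^p(I^k,\mathbb{C})$-analogue of the real-valued results of Theorems \ref{32}, \ref{33} and Corollaries \ref{34}, \ref{36}, \ref{37}, \ref{39}. The three tools I would use throughout are: the preceding corollary $\|\overline{\mathcal{F}}_{\mathbb{C}}^\alpha(f)-f\|_{\mathcal{L}^p}\le M\|\alpha\|_\infty\|\overline{\mathcal{F}}_{\mathbb{C}}^\alpha(f)-\bar D_{\mathbb{C}}f\|_{\mathcal{L}^p}$; the extension lemma bounds $\|D\|\le\|\bar D_{\mathbb{C}}\|\le M\|D\|$ together with $\overline{(Id-D)_{\mathbb{C}}}=Id-\bar D_{\mathbb{C}}$; and the standing hypothesis $M\|\alpha\|_\infty<1$, which keeps every denominator $1-M\|\alpha\|_\infty$ positive. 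The only structural difference from Section 3 is that each passage through the basic perturbation inequality now carries the scale factor $M$.

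For item (1) I would insert the triangle inequality $\|\overline{\mathcal{F}}_{\mathbb{C}}^\alpha(f)-\bar D_{\mathbb{C}}f\|_{\mathcal{L}^p}\le\|\overline{\mathcal{F}}_{\mathbb{C}}^\alpha(f)-f\|_{\mathcal{L}^p}+\|(Id-\bar D_{\mathbb{C}})f\|_{\mathcal{L}^p}$ into the corollary and solve for $\|\overline{\mathcal{F}}_{\mathbb{C}}^\alpha(f)-f\|_{\mathcal{L}^p}$, which yields the factor $\tfrac{M\|\alpha\|_\infty}{1-M\|\alpha\|_\infty}$ in front of $\|(Id-\bar D_{\mathbb{C}})f\|_{\mathcal{L}^p}$; bounding the last quantity by $M\|Id-D\|\,\|f\|_{\mathcal{L}^p}$ upgrades the constant to the asserted $M^2$-bound, and $\|\alpha\|_\infty=0$ kills the right side, forcing $\overline{\mathcal{F}}_{\mathbb{C}}^\alpha=Id$. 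Item (2) then drops out of item (1) via $\|\overline{\mathcal{F}}_{\mathbb{C}}^\alpha(f)\|_{\mathcal{L}^p}\le\|f\|_{\mathcal{L}^p}+\|\overline{\mathcal{F}}_{\mathbb{C}}^\alpha(f)-f\|_{\mathcal{L}^p}$ and a supremum over the unit sphere.

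For item (3) I would instead start from the reverse inequality $\|f\|_{\mathcal{L}^p}-\|\overline{\mathcal{F}}_{\mathbb{C}}^\alpha(f)\|_{\mathcal{L}^p}\le\|\overline{\mathcal{F}}_{\mathbb{C}}^\alpha(f)-f\|_{\mathcal{L}^p}$, bound the right side by $M\|\alpha\|_\infty(\|\overline{\mathcal{F}}_{\mathbb{C}}^\alpha(f)\|_{\mathcal{L}^p}+M\|D\|\,\|f\|_{\mathcal{L}^p})$ using $\|\bar D_{\mathbb{C}}\|\le M\|D\|$, and collect terms to reach $(1-M^2\|\alpha\|_\infty\|D\|)\|f\|_{\mathcal{L}^p}\le(1+M\|\alpha\|_\infty)\|\overline{\mathcal{F}}_{\mathbb{C}}^\alpha(f)\|_{\mathcal{L}^p}$; the hypothesis $\|\alpha\|_\infty<M^{-2}\|D\|^{-1}$ makes the left coefficient positive, which is the bounded-below estimate, and being bounded below on a Banach space forces injectivity and closed range. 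For item (4) I would read off from item (1) that $\|Id-\overline{\mathcal{F}}_{\mathbb{C}}^\alpha\|\le\tfrac{M^2\|\alpha\|_\infty\|Id-D\|}{1-M\|\alpha\|_\infty}$, observe that the hypothesis $\|\alpha\|_\infty<(M+M^2\|Id-D\|)^{-1}$ is exactly the rearrangement of $\|Id-\overline{\mathcal{F}}_{\mathbb{C}}^\alpha\|<1$, and invoke Lemma \ref{lem} to produce the two-sided bounded inverse; its norm bound $\tfrac{1+M\|\alpha\|_\infty}{1-M^2\|\alpha\|_\infty\|D\|}$ is obtained by applying the bounded-below inequality of item (3) to $f=(\overline{\mathcal{F}}_{\mathbb{C}}^\alpha)^{-1}(g)$. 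Item (5) is then immediate: a boundedly invertible operator is bijective, hence Fredholm with zero-dimensional kernel and cokernel and index $0$. For item (6), with $p=2$ I would invoke Remark \ref{43}, which sharpens the amplification constant to $M=1$, so the hypothesis $\|\alpha\|_\infty<\|D\|^{-1}$ coincides with the bounded-below threshold of item (3); thus $\overline{\mathcal{F}}_{\mathbb{C}}^\alpha$ has closed range on the Hilbert space $\mathcal{L}^2(I^k,\mathbb{C})$, and substituting $T=\overline{\mathcal{F}}_{\mathbb{C}}^\alpha$ into the recalled decomposition $X=\overline{\mathrm{Range}(T)}\oplus\mathrm{Ker}(T^*)$ (the closure now superfluous) gives the stated splitting onto $\mathrm{Range}(\overline{\mathcal{F}}_{\mathbb{C}}^\alpha)$ itself.

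The genuinely delicate point, and the one I would verify with care, is the bookkeeping of the constants $M$ and $M^2$: since every transfer from the real inequality to the complex $\mathcal{L}^p$ inequality inflates the scale factor, the thresholds in items (3) and (4) must be mutually compatible for the inverse-norm estimate to be legitimate, and because $M\ge 1$ in general (rather than $M=1$), one cannot simply recycle the real-case chain of implications without checking that item (4)'s hypothesis is used together with the bounded-below regime of item (3). The second subtle ingredient is the recognition that item (6) truly requires the sharpened $p=2$ value $M=1$ from Remark \ref{43}: without it the clean hypothesis $\|\alpha\|_\infty<\|D\|^{-1}$ would be too weak to guarantee closed range, and the orthogonal decomposition would only be available after closing the range rather than as an honest direct sum, so flagging this is the crux of making the final item rigorous.
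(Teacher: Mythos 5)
Your proposal is correct and follows exactly the route the paper itself indicates: the paper omits the proof, stating only that the theorem is deduced ``in a similar way'' to Theorems \ref{32}--\ref{33} and Corollaries \ref{34}, \ref{36}, \ref{37}, \ref{39} using the preceding $\mathcal{L}^p$ perturbation lemma and, for the last item, the Hilbert-space decomposition $X=\overline{\operatorname{Range}(T)}\oplus\operatorname{Ker}(T^*)$. You supply precisely those details, with the correct $M$ and $M^2$ bookkeeping, and your flagged caveat about the compatibility of the thresholds in items (3) and (4) is a legitimate concern about the statement itself rather than a defect of your argument.
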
 
\section{Some Approximation Aspects}
In this section we shall return to the multivariate $\alpha$-fractal functions in the function space $\mathcal{C}(I^k, \mathbb{R})$. First let us recall the following well-known definition.
\begin{definition}
A Schauder basis in an infinite dimensional Banach space $X$ is a sequence $\left(e_n\right)$ of elements in $X$ satisfying the following condition: for every $x$ in $X$, there is a unique sequence $\left(a_n(x)\right)$ of scalars such that
$$
x=\sum_{n=1}^{\infty} a_n(x) e_n, \quad \text { i.e., } \quad\left\|x-\sum_{n=1}^m a_n(x) e_n\right\| \rightarrow 0 \quad \text { as } m \rightarrow \infty .
$$
The coefficients $a_n(x)$ are linear functions of $x$ uniquely determined by the basis referred to as the associated sequence of coefficient functionals.  
\end{definition}

Now, we find a Schauder basis for $\mathcal{C}(I^k, \mathbb{R})$ consisting of fractal functions; the maps involved are perturbations of those belonging to a classical basis for $\mathcal{C}(I^k, \mathbb{R})$. The central idea is to use the fact that a topological automorphism preserves a Schauder basis, however, we provide the details in the following.

\begin{theorem}
There exists a Schauder basis consisting of multivariate fractal functions for the space $\mathcal{C}\left([0,1]^k, \mathbb{R}\right)$.
\end{theorem}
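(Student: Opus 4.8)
The plan is to combine Corollary~\ref{36} with the classical principle that a topological automorphism of a Banach space carries any Schauder basis onto another Schauder basis; this is the multivariate analogue of the argument used in the one-dimensional setting. First I would fix the scaling function $\alpha$ so that $\|\alpha\|_{\infty}<\left(1+\|Id-D\|\right)^{-1}$. By Corollary~\ref{36}, the fractal operator $\mathcal{F}_{\Delta,D}^{\alpha}:\mathcal{C}(I^k,\mathbb{R})\to\mathcal{C}(I^k,\mathbb{R})$ is then a topological automorphism, that is, a bijective bounded linear map whose inverse is also bounded.

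Next I would invoke the known fact that $\mathcal{C}([0,1]^k,\mathbb{R})$ admits a Schauder basis $(e_n)_{n\in\mathbb{N}}$. For $k=1$ this is the Faber--Schauder system, and for general $k$ a basis is produced by taking a suitable (for instance, square) ordering of the tensor products of the one-dimensional system: since $\mathcal{C}([0,1]^k,\mathbb{R})$ is the injective tensor product of $k$ copies of $\mathcal{C}([0,1],\mathbb{R})$, the tensor products of the monotone Faber--Schauder bases again constitute a Schauder basis under such an ordering.

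I would then record the transport lemma: if $\Phi$ is a topological automorphism of a Banach space $X$ and $(e_n)$ is a Schauder basis of $X$, then $(\Phi(e_n))$ is a Schauder basis of $X$. For the existence of an expansion, given $x\in X$ I write $\Phi^{-1}x=\sum_{n}a_n(\Phi^{-1}x)\,e_n$ and apply the continuous linear map $\Phi$ termwise to obtain $x=\sum_{n}a_n(\Phi^{-1}x)\,\Phi(e_n)$; for uniqueness, if also $x=\sum_{n}b_n\,\Phi(e_n)$, then applying the bounded operator $\Phi^{-1}$ yields $\Phi^{-1}x=\sum_{n}b_n e_n$, whence uniqueness of the $(e_n)$-expansion forces $b_n=a_n(\Phi^{-1}x)$ for every $n$. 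Applying this with $\Phi=\mathcal{F}_{\Delta,D}^{\alpha}$ shows that $\left(\mathcal{F}_{\Delta,D}^{\alpha}(e_n)\right)_{n}=\left((e_n)_{\Delta,D}^{\alpha}\right)_{n}$ is a Schauder basis of $\mathcal{C}(I^k,\mathbb{R})$, and since each $(e_n)_{\Delta,D}^{\alpha}$ is by construction a multivariate $\alpha$-fractal function, this furnishes the desired basis of fractal functions.

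The only non-routine ingredient is the existence of a classical Schauder basis for $\mathcal{C}([0,1]^k,\mathbb{R})$ when $k\ge 2$, so I expect the main obstacle to lie in citing or verifying that multivariate basis rather than in the fractal-analytic part, which is already packaged in Corollary~\ref{36}. Once the classical basis is in hand, the topological-automorphism property together with the elementary transport lemma make the conclusion automatic.
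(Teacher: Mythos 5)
Your proposal is correct and follows essentially the same route as the paper: choose $\|\alpha\|_{\infty}<(1+\|Id-D\|)^{-1}$ so that Corollary~\ref{36} makes $\mathcal{F}_{\Delta,D}^{\alpha}$ a topological automorphism, then transport a classical Schauder basis $(e_n)$ to $(\mathcal{F}_{\Delta,D}^{\alpha}(e_n))$ via exactly the existence-and-uniqueness argument the paper writes out. Your added detail on why $\mathcal{C}([0,1]^k,\mathbb{R})$ has a Schauder basis for $k\ge 2$ (tensor products of the Faber--Schauder system under a square ordering) is in fact more explicit than the paper, which only asserts this existence.
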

\begin{proof}
Let $\left(e_n\right)$ be a Schauder basis of $\mathcal{C}\left([0,1]^k, \mathbb{R}\right)$, whose existence is hinted at the last paragraph. Choose $\alpha$ such that $\|\alpha\|_{\infty}<(1+\|I d-D\|)^{-1}$, so that by Corollary (\ref{36}), the fractal operator $\mathcal{F}_{\Delta, D}^\alpha$ is a topological automorphism. If $g \in \mathcal{C}\left([0,1]^k, \mathbb{R}\right)$ then $\left(\mathcal{F}_{\Delta, D}^\alpha\right)^{-1}(g) \in \mathcal{C}\left([0,1]^k, \mathbb{R}\right)$, so that
$$
\left(\mathcal{F}_{\Delta, D}^\alpha\right)^{-1}(g)=\sum_{n=1}^{\infty} a_n\left(\left(\mathcal{F}_{\Delta, D}^\alpha\right)^{-1}(g)\right) e_n
$$
By the continuity of the fractal linear operator $\mathcal{F}_{\Delta, D}^\alpha$ it follows that
$$
g=\mathcal{F}_{\Delta, D}^\alpha\left(\mathcal{F}_{\Delta, D}^\alpha\right)^{-1}(g)=\sum_{n=1}^{\infty} a_n\left(\left(\mathcal{F}_{\Delta, D}^\alpha\right)^{-1}(g)\right) e_n^\alpha
$$
where $e_n^\alpha=\mathcal{F}_{\Delta, D}^\alpha\left(e_n\right)$. Assume that $g=\sum_{n=1}^{\infty} b_n e_n^\alpha$ was another representation of $g$. Since $\left(\mathcal{F}_{\Delta, D}^\alpha\right)^{-1}$ is also continuous, we have
$$
\left(\mathcal{F}_{\Delta, D}^\alpha\right)^{-1}(g)=\sum_{n=1}^{\infty} b_n e_n
$$
and hence that $b_n=a_n\left(\left(\mathcal{F}_{\Delta, D}^\alpha\right)^{-1}(g)\right)$ for each $n$. Consequently, $\left(e_n^\alpha\right)$ is a Schauder basis for $\mathcal{C}\left([0,1]^k, \mathbb{R}\right)$, obtaining the desired conclusion.
\end{proof}
\begin{definition}
Consider the fractal operator $\mathcal{F}_{\Delta, D}^\alpha: \mathcal{C}(I^k, \mathbb{R}) \rightarrow \mathcal{C}(I^k, \mathbb{R})$ defined by $f \mapsto f_{\Delta, D}^\alpha$; see Sect. 3 . Let $p \in \mathcal{C}(I^k, \mathbb{R})$ be a multivariate polynomial. Then $\mathcal{F}_{\Delta, D}^\alpha(p)=p_{\Delta, D}^\alpha$, denoted for simplicity by $p^\alpha$, is referred to as a multivariate fractal polynomial; see also \cite{M2}. Let $\mathcal{P}(I^k) \subset \mathcal{C}(I^k, \mathbb{R})$ be the space of all multivariate polynomials, then we denote by $\mathcal{P}^\alpha(I^k)$, the image space $\mathcal{F}_{\Delta, D}^\alpha(\mathcal{P}(I^k))$
  
\end{definition} 
Let $\mathcal{P}_{m_1,\dots,m_q}(I^q)$ be the set of all multivariate polynomials of total degree at most $m_1+\dots+m_q$ defined on $I^q$. That is,

\begin{equation*}
\begin{aligned}
\mathcal{P}_{m_1.\dots.m_q}(I^q)=\Big\{p(x_1,\dots,x_q)=\sum_{i_1=0}^{m_1} &\dots.\sum_{i_q=0}^{m_q} a_{i_1,\dots,i_q} x_1^{i_1}.\dots.x_q^{i_q}: a_{i j} \in \mathbb{R},\\& 0 \leq i \leq m \text { and } 0 \leq j \leq n \Big\}.
\end{aligned}
\end{equation*}
We let $\mathcal{P}_{m_1,\dots,m_q}^\alpha(I^q)=\mathcal{F}_{\Delta, D}^\alpha\left(\mathcal{P}_{m_1,\dots,m_q}(I^q)\right)$

\begin{theorem}
Let $\mathcal{C}(I^k, \mathbb{R})$ be endowed with the uniform norm, $f \in \mathcal{C}(I^k, \mathbb{R})$, and $D: \mathcal{C}(I^k, \mathbb{R}) \rightarrow \mathcal{C}(I^k, \mathbb{R}), D \neq I d$ be a bounded linear operator satisfying $(Df)(x_{1,i_1},\ldots,x_{k,i_k})=f(x_{1,i_1},\ldots,x_{k,i_k}),$ for all $(i_1, \ldots,i_k) \in
\overset{k}{\underset{q=1}{ \prod}} \partial  \Sigma_{N_q,0}$. For any $\epsilon>0$, net $\Delta$ of the rectangle $I^k$, there exists a bivariate fractal polynomial $p^\alpha$ such that
$$
\left\|f-p^\alpha\right\|_{\infty}<\epsilon $$ .
\end{theorem}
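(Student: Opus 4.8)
The plan is to combine the classical density of polynomials in $\mathcal{C}(I^k,\mathbb{R})$ with the fact, established in Theorem \ref{32}, that the fractal operator $\mathcal{F}_{\Delta,D}^\alpha$ is a small perturbation of the identity when $\|\alpha\|_\infty$ is small. First I would invoke the multivariate Weierstrass (Stone--Weierstrass) approximation theorem: given $\epsilon>0$, there is an ordinary multivariate polynomial $p\in\mathcal{P}(I^k)$ with
$$\|f-p\|_\infty<\frac{\epsilon}{2}.$$
The candidate fractal polynomial is then $p^\alpha:=\mathcal{F}_{\Delta,D}^\alpha(p)$, and it remains to choose the scale function $\alpha$ so that $p^\alpha$ stays within $\epsilon/2$ of $p$.

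Before estimating, I would check that $p^\alpha$ is a genuine fractal polynomial, that is, that the construction of Section \ref{FIF} applies with base function $s=Dp$. This is exactly where the hypothesis on $D$ enters: since $(Dp)(x_{1,i_1},\ldots,x_{k,i_k})=p(x_{1,i_1},\ldots,x_{k,i_k})$ at all boundary vertices $(i_1,\ldots,i_k)\in\prod_{q=1}^{k}\partial\Sigma_{N_q,0}$, the function $s=Dp$ meets the interpolation requirement imposed on the base function, so $p^\alpha=p_{\Delta,D}^\alpha$ is well defined and lies in the image space $\mathcal{P}^\alpha(I^k)$.

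Next I would apply the perturbation bound of Theorem \ref{32} with $p$ in place of $f$, obtaining
$$\|p^\alpha-p\|_\infty=\|p_{\Delta,D}^\alpha-p\|_\infty\leq\frac{\|\alpha\|_\infty}{1-\|\alpha\|_\infty}\,\|p-Dp\|_\infty.$$
Here $p$, and hence the fixed number $\|p-Dp\|_\infty$, is already determined, so the right-hand side tends to $0$ as $\|\alpha\|_\infty\to 0$. Consequently I may select an admissible scale function $\alpha$ with $\|\alpha\|_\infty<1$ small enough that
$$\frac{\|\alpha\|_\infty}{1-\|\alpha\|_\infty}\,\|p-Dp\|_\infty<\frac{\epsilon}{2}.$$
The triangle inequality then gives $\|f-p^\alpha\|_\infty\leq\|f-p\|_\infty+\|p-p^\alpha\|_\infty<\epsilon$, completing the argument.

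The argument is essentially routine; the one point requiring care is the logical order of the choices. The scale function $\alpha$ must be selected \emph{after} the polynomial $p$, since the admissible size of $\|\alpha\|_\infty$ depends on $\|p-Dp\|_\infty$. The standing hypotheses on $D$ are what make the statement meaningful: the condition $D\neq Id$ guarantees the fractal polynomials are genuinely nonclassical (otherwise Theorem \ref{32} forces $\mathcal{F}_{\Delta,D}^\alpha=Id$ and $p^\alpha=p$), while the vertex-matching condition ensures $Dp$ is an admissible base function, so that $p^\alpha$ is a bona fide element of $\mathcal{P}^\alpha(I^k)$.
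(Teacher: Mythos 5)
Your proposal is correct and follows essentially the same route as the paper: Stone--Weierstrass to get $p$ with $\|f-p\|_\infty<\epsilon/2$, then the perturbation bound of Theorem \ref{32} to make $\|p-p^\alpha\|_\infty<\epsilon/2$ by shrinking $\|\alpha\|_\infty$ after $p$ is fixed, and the triangle inequality. The only cosmetic difference is that the paper bounds $\|p-Dp\|_\infty$ by $\|Id-D\|\,\|p\|_\infty$ and writes an explicit admissible threshold for $\|\alpha\|_\infty$, whereas you keep $\|p-Dp\|_\infty$ directly; your added remark that $Dp$ must interpolate $p$ at the vertices for $p^\alpha$ to be well defined is a point the paper leaves implicit.
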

\begin{proof}
 Let $\epsilon>0$ be given. By the Stone-Weierstrass theorem, there exists a polynomial function $p$ in two variables such that
$$
\|f-p\|_{\infty}<\frac{\epsilon}{2}.
$$
Fix a net $\Delta$ of the rectangle $I^k$, a bounded linear operator $D: \mathcal{C}(I^k, \mathbb{R}) \rightarrow \mathcal{C}(I^k, \mathbb{R}), D \neq I d$ satisfying $(Df)(x_{1,i_1},\ldots,x_{k,i_k})=f(x_{1,i_1},\ldots,x_{k,i_k}),$ for all $(i_1, \ldots,i_k) \in
\overset{k}{\underset{q=1}{ \prod}} \partial  \Sigma_{N_q,0}$. Choose $\alpha: I^k \rightarrow \mathbb{R}$ as continuous function on $I^k$ with $\|\alpha\|_{\infty}=\sup \{|\alpha(x_1,x_2,\dots,x_k)|:(x_1,x_2,\dots,x_k) \in I^k\}<1$ such that
$$
\|\alpha\|_{\infty}<\frac{\frac{\epsilon}{2}}{\frac{\epsilon}{2}+\|I d-D\|\|p\|_{\infty}}.
$$
Then we have
$$
\begin{aligned}
\left\|f-p^\alpha\right\|_{\infty} & \leq\|f-p\|_{\infty}+\left\|p-p^\alpha\right\|_{\infty} \\
& \leq\|f-p\|_{\infty}+\frac{\|\alpha\|_{\infty}}{1-\|\alpha\|_{\infty}}\|I d-D\|\|p\|_{\infty} \\
& <\frac{\epsilon}{2}+\frac{\epsilon}{2} \\
& =\epsilon .
\end{aligned}
$$
In the above, the first inequality is just the triangle inequality, second follows from Theorem (\ref{32}) and third is obvious.   
\end{proof}

\subsection*{Acknowledgements}
 The work of first author  is financially supported by the
CSIR, India with grant no:
 09/1028(0019)/2020-EMR-I. 
 \subsection*{Conflict of interest} The authors declare that they have no conflict of interest.
 \subsection*{Data availability}  Data sharing not applicable to this article as no data sets were generated or analysed during the current study.

\bibliographystyle{amsplain}

\end{document}